\numberwithin{equation}{section}
\theoremstyle{plain}
\newtheorem{Proposition}[equation]{Proposition}
\newtheorem{Corollary}[equation]{Corollary}
\newtheorem*{Corollary*}{Corollary}
\newtheorem{Theorem}[equation]{Theorem}
\newtheorem*{Theorem*}{Theorem}
\theoremstyle{definition}
\newtheorem{Definition}[equation]{Definition}
\newtheorem{Example}[equation]{Example}
\newtheorem{Question}[equation]{Question}
\def\C{\mathbb{C}}
\def\D{\mathbb{D}}
\def\T{\mathbb{T}}
\def\Z{\mathbb{Z}}
\def\phi{\varphi}
\def\K{\mathcal{K}}
\renewcommand{\leq}{\leqslant}
\renewcommand{\geq}{\geqslant}
\renewcommand{\subset}{\subseteq}
\begin{document}
%
%
%%%%%%%%%%%%%%%%%%%%%%%%%%%%%%%%%%%%%%%%%%%%%%%%%%%%%%%%%%%%%%%%%%%%%
%
%
%
%
%
%
%
%
%%%%%%%%%%%%%%%%%%%%%%%%%%%%%%%%%%%%%%%%%%%%%%%%%%%%%%%%%%%%%%%%%%%%%

\title{Inner vectors for Toeplitz operators}

\author[Cheng]{Raymond Cheng}
\address{Department of Mathematics and Statistics,
  Old Dominion University,
  Norfolk, VA 23529}
  \email{rcheng@odu.edu}

\author[Mashreghi]{Javad Mashreghi}
\address{D\'epartement de math\'ematiques et de statistique, Universit\'e laval, Qu\'ebec, QC, Canada, G1V 0A6}
\email{javad.mashreghi@mat.ulaval.ca}

\author[Ross]{William T. Ross}
	\address{Department of Mathematics and Computer Science, University of Richmond, Richmond, VA 23173, USA}
	\email{wross@richmond.edu}
	
	\dedicatory{Dedicated to Thomas Ransford on the occasion of his sixtieth birthday.}

\begin{abstract}
In this paper we survey and bring together several approaches to obtaining inner functions for Toeplitz operators.  These approaches include the classical definition, the Wold decomposition, the operator-valued Poisson Integral, and Clark measures. We then extend these notions somewhat to inner functions on model spaces.  Along the way we present some novel examples. \end{abstract}

%\subjclass[2010]{Primary: , Secondary: }

%\keywords{}

\thanks{This work was supported by NSERC (Canada).}

\maketitle

%%%%%%%%%%%%%%%%%%%%%%%%%%%%%%%%%%%%%%%%%%%%%%%%%%%%%%%%%%%%%%%%%%%%%
%%%%%%%%%%%%%%%%%%%%%%%%%%%%%%%%%%%%%%%%%%%%%%%%%%%%%%%%%%%%%%%%%%%%%
%%%%%%%%%%%%%%%%%%%%%%%%%%%%%%%%%%%%%%%%%%%%%%%%%%%%%%%%%%%%%%%%%%%%%

\bibliographystyle{plain}

\section{Introduction} 
For $\phi \in H^{\infty}$, the bounded analytic functions on the open unit disk $\D$, let 
\begin{equation}\label{98743oie}
T_{\phi}: H^2 \to H^2, \quad T_{\phi} f = \phi f,
\end{equation} denote the analytic {\em Toeplitz operator}  on the classical Hardy space $H^2$.  In this paper we survey, continue, and synthesize some discussions begun in  \cite{InnerRKS, MR1437202, MR1969797} dealing with the notion of an ``inner vector'' for $T_{\phi}$ along with the general notion of an inner vector for a contraction on a Hilbert space. We connect these results with the operator-valued Poisson kernel and some work from \cite{MR2018850, MR2050137} concerning ``factoring an $L^1$ function through a contraction''. Along the way we also produce some interesting examples and reformulations of these connections.

\section{Basic definitions and facts}

We begin with the definition of an inner vector for a Toeplitz operator from \cite{MR1437202}. Recall that the inner product on the Hardy space $H^2$ is 
\begin{equation}\label{H2IP}
\langle f, g \rangle := \int_{\T} f\overline{g} \,dm,
\end{equation}
where $m$ is normalized Lebesgue measure on the unit circle $\T$.  As is tradition, we equate an $f \in H^2$ with its $L^2 = L^2(\T, m)$ radial boundary function, i.e., 
$$f(\zeta) = \lim_{r \to 1^{-}} f(r \zeta)$$
for almost every $\zeta \in \T$.  We will also use the term {\em inner} function (without any qualifiers like in Definition \ref{oow0099898} below) to denote an $H^{\infty}$ function that has unimodular boundary values almost everywhere. Classical theory \cite{Duren} says that an inner function $I$ can be factored uniquely as $I = \xi B S_{\mu}$, where $\xi$ is a unimodular constant, $B$ is a Blaschke product, and $S_{\mu}$ is a singular inner function associated with a positive measure $\mu$ on $\T$ that is singular with respect to $m$. We say the {\em degree} of $I$ is equal to $d$ if $I$ is a finite Blaschke product of order $d$, and equal to infinity otherwise. Furthermore, any function $f \in H^2$ can be factored, uniquely up to multiplicative constants, as $f = I G$, where $I$ is an inner function and $G \in H^2$ is an outer function.

For $\phi \in H^{\infty}$ the analytic Toeplitz operator $T_{\phi}$ from \eqref{98743oie} is a bounded operator on $H^2$ whose norm $\|T_{\phi}\|$ satisfies 
$$\|T_{\phi}\| = \|\phi\|_{\infty} := \operatorname{ess-sup\{}|\phi(\xi)|: \xi \in \T\}.$$
Also recall that the adjoint $T_{\phi}^{*}$ of $T_{\phi}$ satisfies $T_{\phi}^{*} = T_{\overline{\phi}}$, where $T_{\overline{\phi}} f = P(\overline{\phi} f)$ and $P$ is the Riesz projection of $L^2$ onto $H^2$. When $\phi$ is an inner function, observe from \eqref{H2IP} that $T_{\phi}$ is an isometry. See \cite[Ch.~4]{MR3526203} for the details of these basic Toeplitz operator facts and \cite{BS} for a definitive treatise.

\begin{Definition}\label{oow0099898}
For $\phi \in H^{\infty}$ we say a unit vector $f \in H^2$ is {\em $T_{\phi}$-inner} if 
$\langle T^{n}_{\phi} f, f\rangle = 0$ for all $n \geq 1$. 
\end{Definition}

When $\phi(z) = z$, one can see from Fourier analysis that the $T_{z}$-inner vectors are precisely the inner functions. Also observe that replacing $\phi$ with $c \phi$, where $c > 0$, in Definition \ref{oow0099898} does not change whether or not a function $f$ is $T_{\phi}$-inner. Thus we can always assume, by scaling $\phi$, that 
$$\phi \in b(H^{\infty}) := \{g \in H^{\infty}: \|g\|_{\infty} \leq 1\},$$
the closed unit ball of $H^{\infty}$. This normalization will be important when we need $T_{\phi}$ to be a contraction operator since in this case $\|T_{\phi}\| = \|\phi\|_{\infty} \leq 1$. 
 Immediate from Definition \ref{oow0099898} and the inner product formula from \eqref{H2IP} are the following facts. 

\begin{Proposition}\label{6iehtg43tnnncknkn}
Let $\phi \in b(H^{\infty})$. 
\begin{enumerate}
\item If $f \in H^2$ is $T_{\phi}$-inner and $I$ is any inner function, then $I f$ is $T_{\phi}$-inner. 
\item If $f \in H^2$ is $T_{\phi}$-inner and $\Theta$ is any inner divisor of $f$, i.e., $f/\Theta \in H^{2}$, then $f/\Theta$ is $T_{\phi}$-inner. 
\item Any unit vector belonging to $\ker T_{\overline{\phi}}$ is $T_{\phi}$-inner. 
\end{enumerate}
\end{Proposition}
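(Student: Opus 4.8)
The plan is to reduce all three statements to the single scalar identity obtained by unwinding Definition~\ref{oow0099898}. Since $T_\phi$ is multiplication by $\phi$, we have $T_\phi^n f = \phi^n f$, and so by the inner product formula \eqref{H2IP} the condition that $f$ be $T_\phi$-inner reads
\begin{equation*}
\langle T_\phi^n f, f\rangle = \int_\T \phi^n |f|^2 \, dm = 0 \qquad (n \geq 1).
\end{equation*}
The crucial observation I would exploit is that this quantity depends on $f$ only through the boundary modulus $|f|^2$; hence any modification of $f$ leaving $|f|^2$ unchanged almost everywhere on $\T$ automatically preserves both the $T_\phi$-inner condition and, by reading the same integral with $n=0$, the unit-norm normalization $\|f\|=1$.

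For (1), I would note that if $I$ is inner then $|I|=1$ a.e.\ on $\T$, whence $|If|^2 = |f|^2$ a.e.; substituting into the displayed identity gives the claim, and $\|If\| = \|f\| = 1$. For (2) the same mechanism applies: writing $g = f/\Theta$ with $\Theta$ inner, the hypothesis $g \in H^2$ guarantees that $g$ is a legitimate element of $H^2$, and since $|\Theta| = 1$ a.e.\ one has $|g|^2 = |f|^2/|\Theta|^2 = |f|^2$ a.e., so $g$ inherits both the normalization and the vanishing integrals directly from $f$.

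For (3) the argument is instead a one-line adjoint computation. Recalling $T_\phi^* = T_{\overline{\phi}}$, for any $n \geq 1$ I would transfer one factor of $T_\phi$ across the inner product,
\begin{equation*}
\langle T_\phi^n f, f\rangle = \langle T_\phi^{n-1} f, T_\phi^* f\rangle = \langle T_\phi^{n-1} f, T_{\overline{\phi}} f\rangle,
\end{equation*}
which vanishes the moment $f \in \ker T_{\overline{\phi}}$, with no appeal to the boundary-modulus observation needed.

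I do not anticipate a genuine obstacle: the entire content of the proposition is carried by the unimodularity of inner functions on $\T$ together with the adjoint relation used in (3). The only points deserving a word of care are that in (2) the membership $f/\Theta \in H^2$ is \emph{part of the hypothesis} rather than something to be verified, and that throughout one is tacitly using the identification of $H^2$ functions with their radial boundary values so that the boundary integrals are meaningful.
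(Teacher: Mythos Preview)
Your proof is correct and is exactly the argument the paper has in mind: the authors do not spell out a proof at all, merely remarking that the proposition is ``immediate from Definition~\ref{oow0099898} and the inner product formula from \eqref{H2IP},'' which is precisely the boundary-modulus observation you use for (1)--(2) together with the adjoint identity $T_\phi^*=T_{\overline\phi}$ for (3).
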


 If $u$ denotes the inner factor of $\phi$, it is known \cite[p.~108]{MR3526203} that 
 $$\ker T_{\overline{\phi}} = \K_{u}:= (u H^2)^{\perp},$$ the {\em model space} corresponding to $u$. Thus we have the simple corollary. 

\begin{Corollary}\label{IkerT}
If $I$ is any inner function and $u$ is the inner factor of $\phi \in b(H^{\infty})$, then any unit vector from $I \K_u$ is $T_{\phi}$-inner. 
\end{Corollary}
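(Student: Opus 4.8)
The plan is to deduce the Corollary directly from part (3) of Proposition~\ref{6iehtg43tnnncknkn} together with part (1), using the identification $\ker T_{\overline{\phi}} = \K_u$ quoted just above the statement. The two ingredients are already in hand, so the work is purely a matter of assembling them in the right order; I expect no genuine obstacle, only the need to confirm that a unit vector stays a unit vector after multiplication by an inner function.

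First I would take a unit vector $g \in I \K_u$ and write $g = I h$ for some $h \in \K_u$. Since $I$ is inner, $T_I$ is an isometry on $H^2$ (as noted in the excerpt, multiplication by an inner function preserves the $H^2$ norm because the boundary values of $I$ are unimodular), so $\|h\| = \|I h\| = \|g\| = 1$; thus $h$ is itself a unit vector in $\K_u$. Next, by the stated identity $\K_u = \ker T_{\overline{\phi}}$, the unit vector $h$ lies in $\ker T_{\overline{\phi}}$, so part~(3) of Proposition~\ref{6iehtg43tnnncknkn} applies and shows that $h$ is $T_{\phi}$-inner.

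Finally I would invoke part~(1) of the same Proposition: if $h$ is $T_{\phi}$-inner and $I$ is any inner function, then $I h = g$ is $T_{\phi}$-inner. This completes the argument, since $g$ was an arbitrary unit vector of $I\K_u$. The only point requiring a moment's care is the normalization: part~(3) presumes its hypothesis is a \emph{unit} vector, so the isometry step establishing $\|h\|=1$ is not a throwaway but the precise reason we may feed $h$ into part~(3). Everything else is a direct citation of results already proved, so the main ``obstacle'' is simply keeping the logical chain $g = I h \Rightarrow h \in \ker T_{\overline{\phi}}$ (a unit vector, hence $T_\phi$-inner) $\Rightarrow g = I h$ is $T_\phi$-inner in the correct order.
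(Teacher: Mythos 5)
Your proof is correct and follows exactly the route the paper intends: the Corollary is stated as an immediate consequence of the identification $\ker T_{\overline{\phi}} = \K_u$ together with parts (3) and (1) of Proposition~\ref{6iehtg43tnnncknkn}, which is precisely your chain of reasoning. Your extra care in verifying that $h = g/I$ remains a unit vector (via the isometry of multiplication by an inner function) is a worthwhile detail that the paper leaves implicit.
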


This corollary gives us many specific examples of $T_{\phi}$-inner vectors. For example, if $\lambda \in \D$, the reproducing kernel functions 
$$k_{\lambda}(z) := \frac{1 - \overline{u(\lambda)} u(z)}{1 - \overline{\lambda} z}$$ belong to $\K_u$. In fact, finite linear combinations of these functions are dense in $\K_u$ \cite[Ch.~5]{MR3526203}. Since 
$$\|k_{\lambda}\| = \sqrt{k_{\lambda}(\lambda)} = \sqrt{\frac{1 - |u(\lambda)|^2}{1 - |\lambda|^2}},$$
then 
$$I \sqrt{\frac{1 - |\lambda|^2}{1 - |u(\lambda)|^2}} \frac{1 - \overline{u(\lambda)} u(z)}{1 - \overline{\lambda} z}, \quad \lambda \in \D, \; \; \mbox{$I$ inner},$$ are $T_{\phi}$-inner functions.

When $\phi = u$ is a finite Blaschke product, then the model space $\K_u$ is a certain finite dimensional space of rational functions that are analytic in a neighborhood of $\overline{\D}$ \cite[p.~117]{MR3526203}. Furthermore, as we will see in a moment in Theorem \ref{StessinTinner}, every $T_{u}$-inner function is bounded. However, when $u$ is not a finite Blaschke product then $\K_u$ is infinite dimensional \cite[p.~117]{MR3526203} and, since multiplication by an inner function $I$ is an isometry on $H^2$ (see \eqref{H2IP}), $I \K_u$ is a closed infinite dimensional subspace of $L^2$. By a theorem of Grothendieck, it will contain an unbounded function. Putting this all together, we obtain the following.  

\begin{Corollary}\label{unbounded28475423pqeow}
If the inner factor of $\phi \in b(H^{\infty})$ is not a finite Blaschke product, then there are unbounded $T_{\phi}$-inner functions. 
\end{Corollary}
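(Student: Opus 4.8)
The plan is to reduce the statement to the two structural facts that the excerpt has already set up: first, that $I\K_u$ consists entirely of $T_\phi$-inner vectors (Corollary \ref{IkerT}), and second, that $I\K_u$ is an infinite-dimensional closed subspace of $L^2$ whenever the inner factor $u$ of $\phi$ is not a finite Blaschke product. Granting these, it suffices to produce \emph{one} unbounded element of $I\K_u$, since any nonzero vector can be normalized to a unit vector without affecting boundedness. So the entire problem collapses to: show that an infinite-dimensional closed subspace of $L^2(\T,m)$ cannot consist solely of bounded ($L^\infty$) functions.

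First I would verify the hypotheses feeding into Corollary \ref{IkerT}. The assumption is precisely that $u$ is not a finite Blaschke product, so by the cited fact \cite[p.~117]{MR3526203} the model space $\K_u$ is infinite dimensional. Because multiplication by the inner function $I$ is an isometry on $H^2$ (and hence on its closure in $L^2$), the image $I\K_u$ is again a closed, infinite-dimensional subspace, now sitting inside $L^2$. Every unit vector of $I\K_u$ is $T_\phi$-inner by Corollary \ref{IkerT}. Thus the goal is met the moment I exhibit an unbounded function in $I\K_u$.

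The key step is the invocation of Grothendieck's theorem, which the excerpt already signals: a closed subspace of $L^2(\T,m)$ on which the $L^2$ and $L^\infty$ norms are comparable (equivalently, a closed subspace contained in $L^\infty$) must be finite dimensional. The cleanest route is an application of the closed graph theorem. If, for contradiction, every element of the closed subspace $M := I\K_u$ were bounded, then the inclusion $M \hookrightarrow L^\infty$ would be a well-defined linear map from the Banach space $(M, \|\cdot\|_2)$ into $L^\infty$; one checks its graph is closed (if $f_n \to f$ in $L^2$ and $f_n \to g$ in $L^\infty$, then $f_n \to g$ in $L^2$, forcing $f = g$), so by the closed graph theorem there is a constant $C$ with $\|f\|_\infty \le C\|f\|_2$ for all $f \in M$. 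Grothendieck's theorem then asserts that such a subspace, on which $\|\cdot\|_\infty$ and $\|\cdot\|_2$ are equivalent, is finite dimensional, contradicting that $M$ is infinite dimensional. Hence $M$ must contain an unbounded function, and normalizing it gives an unbounded $T_\phi$-inner unit vector.

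The main obstacle, such as it is, is not conceptual but a matter of locating Grothendieck's theorem in the precise form needed and handling the norm-equivalence step carefully. The subtle point is that membership in $L^\infty$ alone does not immediately yield a \emph{uniform} bound $\|f\|_\infty \le C\|f\|_2$; the closed graph argument is exactly what upgrades pointwise boundedness of each element to a uniform comparison of norms, and only then does Grothendieck's dichotomy apply. Everything else is bookkeeping: confirming that the isometry $I$ preserves closedness and dimension, and that normalization preserves unboundedness.
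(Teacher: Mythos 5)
Your proposal is correct and follows essentially the same route as the paper: the paper also combines Corollary \ref{IkerT} with the facts that $\K_u$ is infinite dimensional when $u$ is not a finite Blaschke product, that multiplication by an inner function is an isometry (so $I\K_u$ is a closed infinite-dimensional subspace of $L^2$), and then invokes Grothendieck's theorem to obtain an unbounded element. The only difference is that you spell out the closed-graph/norm-equivalence mechanics behind the Grothendieck step, which the paper simply cites; this is a welcome amplification, not a departure.
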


A specific version of this was pointed out in \cite[p.~103]{MR1437202}.

Of course one needs to discuss the case when $\phi$ is an outer function. Since $\phi H^2$ is dense in $H^2$ \cite[p.~86]{MR3526203}, we see that $\ker T_{\overline{\phi}} = \{0\}$.  In this case, it is not clear that there are {\em any} $T_{\phi}$-inner functions. Indeed, we do not see any obvious ones like $I \ker T_{\overline{\phi}}$ since, in this case,  $\ker T_{\overline{\phi}} = \{0\}$. 

\begin{Example}
Suppose that $\phi$ is the outer function $\phi(z) = 1 + z$ and that $f \in H^2$ is $T_{\phi}$-inner, i.e.,
\[
       \langle T_{\phi}^n f, f \rangle = 0,\ \,\forall n\geq 1.
\]
In other words,
\begin{equation}\label{poipoioiiov}
        \int_{\mathbb{T}} (1 + \xi)^n |f(\xi)|^2\, dm(\xi) = 0,\ \, \forall n\geq 1.
\end{equation}

Then the $L^1$ function $|f|^2$ annihilates $(1+z)^n$ for all $n\geq 1$, along with all their linear combinations.  In particular, $|f|^2$ annihilates
\[
     (1+z)^2 - (1+z) =  1 + 2z + z^2 - 1 - z = z(1+z).
\]
The above observation will be the first step in a proof by induction.  Next,
suppose that $|f|^2$ annihilates $z^k(1+z)$ for all $1\leq k \leq n$.  Then 
\begin{align*}
     z^{n+1}(1+z) &=  (1+z)^{n+2} - \big[  (1+z)^{n+1} - z^{n+1} \big] (1+z).
\end{align*}
By the $T_{\phi}$-inner property of $f$ notice that $|f|^2$ annihilates the first term on the right. It also annihilates  the subtracted expression, by the induction hypothesis (the expression in square brackets is a polynomial of degree $n$).  Thus we  have shown by induction that $|f|^2$ annihilates 
  $ \{z^n(1+z)\}_{n\geq 0}$ (the $n = 0$ case follows from \eqref{poipoioiiov}).
  This means that 
  \begin{equation}\label{55525252a}
  \int_{\T} \xi^{n} (1 + \xi) |f(\xi)|^2 dm(\xi) = 0, \quad n \geq 0,
  \end{equation}
  and by complex conjugation,
  $$ \int_{\T} \overline{\xi}^{n} (1 + \overline{\xi}) |f(\xi)|^2 dm(\xi) = 0, \quad n \geq 0.$$
  A little algebra yields 
  \begin{equation}\label{007yyyyyy}
  \int_{\T} \overline{\xi}^{n + 1} (1 + \xi) |f(\xi)|^2 dm(\xi), \quad n \geq 0.
  \end{equation}
Equations  \eqref{55525252a} and \eqref{007yyyyyy} say that all of the Fourier coefficients of $(1 + \xi) |f(\xi)|^2$ vanish and so $(1 + \xi) |f(\xi)|^2$ is zero. Conclusion: there are no $T_{\phi}$-inner functions when $\phi(z) = 1 +z$.
  %But since $1 + z$ is an outer function, one can show that the weak-$\ast$ closure of the this set is $z H^{\infty}$. In particular this says that 
%$$\int_{\T} \xi^{n} |f(\xi)|^2 dm(\xi) = 0, \quad n \geq 1.$$ Taking complex conjugates shows that 
%$$\int_{\T} \overline{\xi}^{n} |f(\xi)|^2 dm(\xi) = 0, \quad n \geq 1.$$
%By Fourier analysis, $|f|^2$ is a constant $k$. Use \eqref{poipoioiiov} to get 
%$$0 = k \int_{\T} (1 + \xi) dm(\xi) = k$$
%which says that $f$ is the zero function. 

\end{Example}

\section{Inner vectors via the Wold decomposition}

Using some ideas from \cite{MR1437202}, we can use the Wold decomposition \cite{MR0152896} to explore the inner vectors for certain Toeplitz operators. Observe that when $u$ is an inner function the Toeplitz operator $T_u$ is an isometry on $H^2$. Thus the Wold decomposition of $H^2$ with respect to $T_u$ becomes  
$$H^2 = X_{0} \oplus X_1 \oplus T_u X_1 \oplus T_{u}^{2} X_1 \oplus \cdots,$$
where $$X_{0} := \bigcap_{n = 1}^{\infty} T_{u}^{n} H^2 = \{0\}, \quad X_{1} := H^2 \ominus T_{u} H^2 = \K_u.$$
Thus 
$$H^2 = \K_u \oplus u \K_u \oplus u^2 \K_u \oplus \cdots.$$ The above decomposition says that every $f \in H^2$ has a unique expansion as 
\begin{equation}\label{orrfd111919q}
f = F_{0} + u F_{1} + u^2 F_{2} + \cdots, \quad F_{j} \in \K_u.
\end{equation}
 Furthermore, for each integer $N \geq 1$, 
\begin{align*}
\langle u^{N} f, f\rangle & = \Big\langle u^N \sum_{k \geq 0} u^k F_k, \sum_{l \geq 0} u^{l} F_{l}\Big\rangle\\
& = \sum_{k, l \geq 0} \langle u^{N + k - l} F_{k}, F_{l}\rangle\\
& = \sum_{l - k = N} \langle F_{k}, F_{l}\rangle.
\end{align*}
This leads us to the following. 

\begin{Proposition}
A unit vector $f \in H^2$ with expansion 
$$f = F_{0} + u F_{1} + u^2 F_{2} + \cdots, \quad F_{j} \in \K_u,$$
as in \eqref{orrfd111919q}
is $T_{u}$-inner if and only if 
\begin{equation}\label{poeirr34re}
\sum_{k = 0}^{\infty} \langle F_{k}, F_{N + k}\rangle = 0, \quad N \geq 1.
\end{equation}
\end{Proposition}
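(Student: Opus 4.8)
The plan is to recognize that this proposition is nothing more than a direct translation of the definition of $T_u$-inner through the orthogonal-expansion computation displayed immediately before the statement. First I would recall from Definition \ref{oow0099898} that a unit vector $f$ is $T_u$-inner precisely when $\langle T_u^N f, f\rangle = 0$ for every $N \geq 1$, and note that since $T_u$ is multiplication by the inner function $u$ we have $T_u^N f = u^N f$. The entire content of the proposition is therefore carried by the identity
$$\langle u^N f, f\rangle = \sum_{k=0}^{\infty} \langle F_k, F_{N+k}\rangle, \qquad N \geq 1,$$
which is exactly what the displayed computation preceding the statement asserts. Once this identity is in hand, the equivalence with \eqref{poeirr34re} is immediate.

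The one point deserving genuine care is the justification of that identity, namely the interchange of the infinite sums with the inner product. The Wold decomposition exhibits $H^2$ as the orthogonal sum $\bigoplus_{j \geq 0} u^j \K_u$, so the expansion \eqref{orrfd111919q} converges in $H^2$ with $\sum_{j} \|F_j\|^2 = \|f\|^2 < \infty$; applying $u^N$ term by term gives $u^N f = \sum_k u^{N+k} F_k$, still a convergent orthogonal sum. Using joint continuity of the inner product on the norm-convergent partial sums, I would pass to the limit and reduce $\langle u^N f, f\rangle$ to $\lim_M \sum_{k,l \leq M} \langle u^{N+k} F_k, u^l F_l\rangle$. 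Each term is evaluated with the fact that multiplication by $u$ is an isometry: cancelling the common power of $u$ reduces it, when $N+k \geq l$, to $\langle u^{N+k-l} F_k, F_l\rangle$, and otherwise to the conjugate of the analogous expression with the roles reversed. Since $F_k, F_l \in \K_u$ while $u^m \K_u \perp \K_u$ for every $m \geq 1$, every term vanishes except those with $l = N+k$, each of which equals $\langle F_k, F_{N+k}\rangle$. This collapses the double sum to $\sum_{k \geq 0}\langle F_k, F_{N+k}\rangle$, and Cauchy--Schwarz gives $\sum_k |\langle F_k, F_{N+k}\rangle| \leq \|f\|^2 < \infty$, so the series converges absolutely and the limiting manipulations are legitimate.

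Combining these observations, $f$ is $T_u$-inner if and only if $\langle u^N f, f\rangle = 0$ for every $N \geq 1$, which by the established identity is precisely condition \eqref{poeirr34re}. I do not anticipate any real obstacle: the only substantive step is the convergence bookkeeping for the double series, and this is controlled entirely by the mutual orthogonality of the Wold summands $u^j \K_u$ together with $\sum_{j \geq 0}\|F_j\|^2 = \|f\|^2 < \infty$.
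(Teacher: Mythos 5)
Your proof is correct and takes essentially the same route as the paper: the paper's own justification is exactly the displayed computation preceding the statement, which collapses $\langle u^N f, f\rangle = \sum_{k,l}\langle u^{N+k}F_k, u^l F_l\rangle$ to $\sum_{l-k=N}\langle F_k, F_l\rangle$ using the isometry of multiplication by $u$ and the mutual orthogonality of the Wold summands $u^j \K_u$. Your additional convergence bookkeeping (partial sums, Cauchy--Schwarz absolute summability) is a careful elaboration of steps the paper leaves implicit, not a different argument.
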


Though this is just a restatement of the condition for $f$ to be $T_{u}$-inner, it  is useful for producing more tangible examples of $T_{u}$-inner functions. 

\begin{Example}
Choose orthogonal vectors $F_{j}, j \geq 0$ from $\K_u$ so that $\sum_{j \geq 0} \|F_j\|^{2} = 1$. Then the condition \eqref{poeirr34re} is easily satisfied and thus the unit vector $f = \sum_{j \geq 0} u^j F_{j}$ is a $T_{u}$-inner function (as is any inner function times this vector). 
\end{Example}

\begin{Example}\label{432984tre}
If $u(z) = z^n$, then $\K_u = \operatorname{span}\{1, z, z^2, \ldots z^{n - 1}\}$ and the vectors 
$$F_{j} = \frac{z^j}{\sqrt{n}}, \quad 0 \leq j \leq n - 1,$$
satisfy the conditions of the previous example. Thus 
$$f = \sum_{j = 0}^{n - 1} u^j F_{j} = \frac{1}{\sqrt{n}} + \frac{z^{n + 1}}{\sqrt{n}} + \frac{z^{2 n + 2}}{\sqrt{n}} + \frac{z^{3 n + 3}}{\sqrt{n}} + \cdots + \frac{z^{(n - 1)(n + 1)}}{\sqrt{n}}$$
is a $T_{z^n}$-inner vector. 
\end{Example}

\begin{Example}
The previous example can be generalized to a finite Blaschke product 
$$u(z) = \prod_{j = 1}^{n} \frac{z - a_j}{1 - \overline{a_j} z}, \quad a_j \in \D.$$
If we define 
$$F_0(z) = \frac{\sqrt{1 - |a_1|^2}}{1 - \overline{a_1} z},$$
$$F_{1}(z) = \frac{\sqrt{1 - |a_2|^2}}{1 - \overline{a_2} z} \frac{z - a_1}{1 - \overline{a_1} z},$$
$$F_{2}(z) = \frac{\sqrt{1 - |a_3|^2}}{1 - \overline{a_3} z}\frac{z - a_{1}}{1 - \overline{a_1} z} \frac{z - a_2}{1 - \overline{a_2} z},$$
$$\vdots$$
$$F_{n - 1}(z) = \frac{\sqrt{1 - |a_{n}|^2}}{1 - \overline{a_n} z} \prod_{j = 1}^{n - 1} \frac{z - a_{j}}{1 - \overline{a_j} z},$$
one can show that $\{F_0, \ldots, F_{n - 1}\}$ is an orthonormal basis for $\K_u$. Now choose $\alpha_0, \ldots, \alpha_{n - 1} \in \C$ such that $\sum_{j = 0}^{n = 1} |\alpha_j|^2 = 1$. Then
$$f = \sum_{j = 0}^{n - 1} \alpha_j u^{j} F_{j}$$
is $T_{u}$-inner. 
\end{Example}

From Corollary \ref{IkerT} we know, for an inner function $I$, that any unit vector from the set $\{I \ker T_{\overline{u}}: \mbox{$I$ is inner}\}$ is a $T_{\overline{u}}$-inner vector. Perhaps one might think we have equality here. Indeed, sometimes we do. For example, if $u(z) = z$, then $\ker T_{\overline{z}} = \C$ and, as discussed earlier, the $T_{z}$-inner vectors are precisely the inner functions. Here is another positive example of when the unit vectors from $\{I \ker T_{\overline{u}}: \mbox{$I$ is inner}\}$ constitute the complete set of $T_{u}$-inner vectors. 

\begin{Example}
If the inner function $u$ is the single Blaschke factor
$$u(z) = \frac{z - a}{1 - \overline{a} z}, \quad a \in \D,$$ one can show \cite[Ch.~5]{MR3526203} that 
$$\ker T_{\overline{u}} = \K_u = \C \frac{1}{1 - \overline{a} z}.$$
As shown in \cite{InnerRKS}, the $T_{u}$-inner vectors are
$$I \frac{\sqrt{1 - |a|^2}}{1 - \overline{a} z}, \quad \mbox{$I$ inner}.$$
\end{Example}

However, in general, the unit vectors from $\{I \ker T_{\overline{u}}: \mbox{$I$ is inner}\}$ form a proper subset of the $T_{u}$-inner vectors. One can see this with the following example. 

\begin{Example}
Using the technique from Example \ref{432984tre}, we see that when $u(z) = z^n$ the vector 
$$f = \frac{1}{\sqrt{2}} + \frac{z^{n + 1}}{\sqrt{2}}$$ is $T_{u}$-inner. However, $f$ is not of the form $I g$, where $I$ is inner and $g \in \K_u$. This follows from the fact that $f$ is outer and does not belong to $\K_u = \operatorname{span}\{1, z, z^2, \ldots, z^{n - 1}\}$.
\end{Example}

The papers \cite{MR1437202, MR1969797} yield a description of the $T_u$-inner vectors. From the Wold decomposition \eqref{orrfd111919q} we see that any $f \in H^2$ can be written as 
$$f = \sum_{k = 0}^{\infty} F_k u^k.$$
If $\{v_j\}_{j \geq 1}$ is an orthonormal basis for $\K_u$, then we can expand things a bit further and write 
\begin{align*}
f & = \sum_{k = 0}^{\infty} F_{k} u^k\\
& = \sum_{k = 0}^{\infty} u^k \Big(\sum_{j \geq 1} c_{j, k} v_j\Big)\\
& = \sum_{j \geq 1} v_j \Big(\sum_{k = 0}^{\infty} c_{j, k} u^k\Big).
\end{align*}
Observe that 
$$\sum_{j \geq 1} |c_{j, k}|^2 = \|F_{k}\|^2$$ and that 
\begin{align*}
\|f\|^2 & = \sum_{k = 0}^{\infty} \|F_{k}\|^2\\
& = \sum_{k = 0}^{\infty} \sum_{j \geq 1} |c_{j, k}|^2\\
& = \sum_{j \geq 1} \sum_{k = 0}^{\infty} |c_{j, k}|^2.
\end{align*}
Thus for each $j$, $\sum_{k \geq 0} |c_{j, k}|^2 < \infty$ and so 
$$f_{j}(z)  = \sum_{k = 0}^{\infty} c_{j, k} z^k$$ defines a function in $H^2$ (square summable power series). By the Littlewood subordination principle \cite[p.~126]{MR3526203},  $f_{j} \circ u$ also belongs to $H^2$. 

Thus every unit vector $f \in H^2$ has the unique representation 
\begin{equation}\label{009rsdsfqqqx}
f(z)= \sum_{j \geq 1} v_{j}(z) f_{j}(u(z)),
\end{equation}
 where $f_j \in H^2$ with $\sum_{j \geq 1} \|f_{j}\|^2 < \infty$, and $\{v_j\}_{j \geq 1}$ is an orthonormal basis for $\K_u$. Furthermore, as observed in \cite[Prop.~1]{MR1437202} (and can be proved using the above calculation), if 
 \begin{equation}\label{bbb6sfaa}
 f = \sum_{j \geq 1} v_j f_j(u), \quad g = \sum_{j \geq 1} v_j g_j(u),
 \end{equation}
  as in \eqref{009rsdsfqqqx}, then 
  \begin{equation}\label{9yyyyyxy}
 \langle f, g\rangle = \sum_{j \geq 1} \langle f_j, g_j\rangle.
 \end{equation}

\begin{Theorem}\label{StessinTinner}
A unit vector $f$ written as in \eqref{009rsdsfqqqx} is $T_{u}$-inner if and only if 
$$\sum_{j \geq 1}^{\infty} |f_j(\xi)|^2 = 1$$ for almost every $\xi \in \T$. 
\end{Theorem}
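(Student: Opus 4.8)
The plan is to reduce the $T_u$-inner condition to a statement about the Fourier coefficients of the nonnegative $L^1$ function $\Phi := \sum_{j \geq 1} |f_j|^2$. The starting observation is that applying $T_u^N$ to the representation \eqref{009rsdsfqqqx} keeps $f$ in the same form: since $u^N v_j f_j(u) = v_j (z^N f_j)(u)$ and each $z^N f_j$ again lies in $H^2$ with $\sum_{j \geq 1} \|z^N f_j\|^2 = \sum_{j \geq 1} \|f_j\|^2 < \infty$, the vector $u^N f = \sum_{j \geq 1} v_j (z^N f_j)(u)$ is precisely the expansion \eqref{009rsdsfqqqx} of $T_u^N f$, with coefficient functions $z^N f_j$. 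Feeding this pair into the inner-product identity \eqref{9yyyyyxy} I would obtain
$$\langle T_u^N f, f\rangle = \sum_{j \geq 1} \langle z^N f_j, f_j\rangle = \sum_{j \geq 1} \int_\T \xi^N |f_j(\xi)|^2\, dm(\xi).$$

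Next I would justify interchanging the sum and the integral. Because the partial sums of $\sum_{j \geq 1} |f_j|^2$ increase to $\Phi$ and $\int_\T \Phi \, dm = \sum_{j \geq 1}\|f_j\|^2 = \|f\|^2 = 1$ by \eqref{9yyyyyxy} applied with $g = f$, the monotone convergence theorem shows $\Phi \in L^1(\T)$, and dominated convergence (with dominating function $\Phi$) then legitimizes the exchange. This yields the clean formula
$$\langle T_u^N f, f\rangle = \int_\T \xi^N \Phi(\xi)\, dm(\xi), \quad N \geq 1,$$
so that each $\langle T_u^N f, f\rangle$ is exactly a Fourier coefficient of $\Phi$.

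Finally I would read off the equivalence. The vector $f$ is $T_u$-inner precisely when $\int_\T \xi^N \Phi \, dm = 0$ for every $N \geq 1$. Since $\Phi$ is real-valued, conjugating shows the Fourier coefficients of index $\leq -1$ vanish as well, while the zeroth coefficient equals $\int_\T \Phi \, dm = 1$. Thus the $T_u$-inner condition holds if and only if $\Phi$ has the same Fourier coefficients as the constant function $1$, which by uniqueness of Fourier coefficients for $L^1$ functions is equivalent to $\Phi = 1$ almost everywhere on $\T$; the converse direction is immediate since $\int_\T \xi^N \, dm = 0$ for $N \geq 1$. The only genuinely delicate points are the bookkeeping that keeps $T_u^N f$ inside the framework of \eqref{009rsdsfqqqx} so that \eqref{9yyyyyxy} applies, and the integrability of $\Phi$ needed to pass the summation through the integral; once these are in place the conclusion is a one-line Fourier-coefficient argument.
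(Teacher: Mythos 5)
Your proposal is correct and follows essentially the same route as the paper's own proof: both rewrite $T_u^N f$ in the form \eqref{009rsdsfqqqx} with coefficient functions $z^N f_j$, apply the inner-product identity \eqref{9yyyyyxy} to get $\langle T_u^N f, f\rangle = \int_\T \xi^N \big(\sum_{j\geq 1}|f_j|^2\big)\,dm$, and conclude by uniqueness of Fourier coefficients together with the normalization $\|f\|=1$ forcing the constant to be $1$. Your only additions are the explicit monotone/dominated convergence justification for interchanging sum and integral, which the paper leaves implicit.
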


\begin{proof}
Here is the original proof from \cite{MR1437202}. 
With 
$$f = \sum_{j \geq 1} v_j f_j(u),$$ and $n \geq 1$, \eqref{9yyyyyxy} yields
\begin{align}
\langle T_{u}^{n} f, f \rangle \nonumber
& = \langle f u^n, f\rangle\\ \nonumber
& = \big\langle \sum_{j} v_j u^n f_j(u), \sum_{k} v_k f_k(u)\big\rangle\\ \nonumber
& = \sum_{j \geq 1} \langle z^n f_j, f_j\rangle\\ \nonumber
& = \sum_{j \geq 1} \int_{\T} \xi^n |f_j(\xi)|^2 dm(\xi)\\ 
& = \int_{\T} \xi^n\Big( \sum_{j \geq 1}|f_j(\xi)|^2 \Big) dm(\xi).\label{11199274}
\end{align}
Then $\langle T_{u}^{n} f, f\rangle = 0$ for all $n = 1, 2, \ldots$ if and only if, by Fourier analysis, 
$\sum_{j \geq 1} |f_j|^2$ is constant almost everywhere. But since we assuming that $f$ is a unit vector, we see, by putting $n = 0$ in \eqref{11199274}, that 
%$$1 = \|f\|^2 = \sum_{j \geq 1} \|f_j\|^2 = \sum_{j \geq 1} \int_{\T} |f_j|^2 dm = \int_{\T} \sum_{j \geq 1} |f_j|^2 dm$$ 
 $\sum_{j \geq 1} |f_j|^2 = 1$ almost everywhere. 
\end{proof}

When $u$ is a finite Blaschke product, then $\K_u$ is finite dimensional.  In this case \eqref{009rsdsfqqqx} is finite and each basis vector $v_j$ is a rational function that is analytic in a neighborhood of $\overline{\D}$ \cite[Ch.~5]{MR3526203}. From here it follows that every $T_u$-inner vector is a bounded function. Contrast this with Corollary \ref{unbounded28475423pqeow} which says that when $u$ is not a finite Blaschke product there are always $T_u$-inner vectors that are unbounded functions. 

The two papers \cite{MR1437202, MR1969797} go further and discuss an ``inner-outer'' factorization of any $f \in H^2$ in terms of $T_{u}$-inner and $T_{u}$-outer vectors. They also discuss the concept of $T_u$-inner in $H^p$, for $p > 1$, along with some properties of the norms of $T_{u}$-inner vectors as well as their growth near $\T$.

\section{Inner vectors via the operator-valued Poisson kernel}

We can rephrase the language of inner vectors for Toeplitz operators in terms  of operator-valued Poisson kernels \cite{MR2018850}. Moreover, using this new language, we can extend our discussion to inner vectors for contractions on Hilbert spaces. For $\lambda \in \D$ and $\xi \in \T$, define 
\begin{equation}\label{PK}
P_{\lambda}(\xi) := \frac{1}{1 - \overline{\lambda} \xi} + \frac{1}{1 - \lambda \overline{\xi}}- 1
\end{equation}
and observe that this can be written as 
$$P_{\lambda}(\xi) = \frac{1 - |\lambda|^2}{|\xi - \lambda|^2},$$
which is the standard Poisson kernel. Classical theory says that for any $g \in L^1 = L^1(\T, m)$ the function 
$$\int_{\T} P_{\lambda}(\xi) f(\xi) dm(\xi)$$ is harmonic on $\D$ with 
\begin{equation}\label{wuerge}
\lim_{r \to 1^{-}} \int_{\T} P_{r \zeta}(\xi) f(\xi) dm(\xi) = f(\zeta)
\end{equation}
 for almost every $\zeta \in \T$. Furthermore, if $\mu$ is a finite complex measure on $\T$, we have
 \begin{equation}\label{98304ipweofdjsv}
 \int_{\T} P_{\lambda}(\xi) d\mu(\xi) = \widehat{\mu}(0) + \sum_{n \geq 1} \widehat{\mu}(n) \lambda^{n} + \sum_{n \geq 1} \widehat{\mu}(-n) \overline{\lambda}^{n},
 \end{equation}
 where 
 $$\widehat{\mu}(n) := \int_{\T} \overline{\xi}^{n} d \mu(\xi), \quad n \in \mathbb{Z},$$
 are the Fourier coefficients of $\mu$. 
 We will now discuss an operator version of the Poisson kernel.

For a contraction $T$ on a Hilbert space $\mathcal{H}$, we imitate the formula in \eqref{PK} and define, for $\lambda \in \D$, the {\em operator-valued Poisson kernel }$K_{\lambda}(T)$ as 
$$K_{\lambda}(T) := (I - \lambda T^{*})^{-1} + (I - \overline{\lambda} T)^{-1} - I.$$ By the spectral radius formula, notice how $\sigma(T) \subset \overline{\D}$ and thus the formula for $K_{\lambda}(T)$ above makes sense. A computation with Neumann  series will show that for $r \in [0, 1)$ and $\theta \in [0, 2 \pi)$
\begin{equation}\label{oosdfp9}
K_{r e^{i \theta}}(T) = \sum_{n = 0}^{\infty} r^{n} e^{i n \theta} T^{* n} + \sum_{n = 0}^{\infty} r^n e^{-i n \theta} T^{n} - I.
\end{equation}

The operator identity 
$$K_{\lambda}(T) = (I - \overline{\lambda} T)^{-1} (I - |\lambda|^2 T T^{*}) (I - \lambda T^{*})^{-1}$$
from  \cite[Lemma 2.4]{MR2018850} shows that for each $\mathbf{x} \in \mathcal{H}$
$$\langle K_{\lambda}(T) \mathbf{x}, \mathbf{x}\rangle \geq 0, \quad \lambda \in \D.$$ Moreover, the function 
$$\lambda \mapsto \langle K_{\lambda}(T) \mathbf{x}, \mathbf{x}\rangle$$ is harmonic on $\D$. Hence, a classical harmonic analysis result of Herglotz (\cite[p. 10]{Duren} or \cite[p.~17]{MR3526203}) produces a unique positive finite Borel measure $\mu_{T, \mathbf{x}}$ on $\T$ such that 
\begin{equation}\label{ee4rfsds}
\langle K_{\lambda}(T) \mathbf{x}, \mathbf{x}\rangle = \int_{\T} P_{\lambda}(\zeta) d \mu_{T, \mathbf{x}}(\zeta).
\end{equation}
Since $K_{0}(T) = I$ we have
$$1 = \langle \mathbf{x}, \mathbf{x}\rangle = \langle K_{0}(T) \mathbf{x}, \mathbf{x}\rangle  = \int_{\T} d \mu_{T, \mathbf{x}}$$ and so $\mu_{T, \mathbf{x}}$ is a probability measure. 

As we defined for Toeplitz operators earlier in Definition \ref{oow0099898}, we say that a unit vector $\mathbf{x}$ is {\em $T$-inner} if 
$$\langle T^{n} \mathbf{x},  \mathbf{x}\rangle = 0, \quad n \geq 1.$$ Note that $\mathbf{x}$ is $T$-inner if and only if $\mathbf{x}$ is $T^{*}$-inner. From \eqref{oosdfp9} we see that  $\mathbf{x}$ is $T$-inner if and only if $\langle K_{\lambda}(T) \mathbf{x}, \mathbf{x}\rangle = 1$ for all $\lambda \in \D$, or equivalently, 
$$1 = \int_{\T} P_{\lambda}(\zeta) d \mu_{T, \mathbf{x}}(\zeta), \quad \lambda \in \D.$$ By \eqref{98304ipweofdjsv} this is equivalent to the condition $\mu_{T, \mathbf{x}} = m$. This gives us the following.

\begin{Proposition}\label{p354refvcxdw}
Suppose that $T$ is a contraction on a Hilbert space $\mathcal{H}$ and $\mathbf{x}$ is unit vector in $\mathcal{H}$. Then $\mathbf{x}$ is $T$-inner if and only if $\mu_{T, \mathbf{x}} = m$, where $\mu_{T, \mathbf{x}}$ is defined as in \eqref{ee4rfsds}.
\end{Proposition}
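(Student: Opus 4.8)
The plan is to translate the moment condition defining ``$T$-inner'' into a single identity for the harmonic function $\lambda \mapsto \langle K_{\lambda}(T)\mathbf{x}, \mathbf{x}\rangle$ on $\D$, and then read off the Fourier coefficients of $\mu_{T, \mathbf{x}}$ from its Poisson representation \eqref{ee4rfsds}. Because the relevant objects are already expressed through the operator-valued Poisson kernel, the argument should be a clean chain of equivalences rather than a single hard estimate.

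First I would record the conjugate-symmetry observation that $\langle T^n \mathbf{x}, \mathbf{x}\rangle = \overline{\langle T^{*n}\mathbf{x}, \mathbf{x}\rangle}$, so that $\mathbf{x}$ is $T$-inner if and only if \emph{both} families of moments $\langle T^n \mathbf{x}, \mathbf{x}\rangle$ and $\langle T^{*n}\mathbf{x}, \mathbf{x}\rangle$ vanish for every $n \geq 1$. Pairing the Neumann expansion \eqref{oosdfp9} against $\mathbf{x}$ and writing $\lambda = r e^{i\theta}$ then gives
\begin{equation*}
\langle K_{\lambda}(T)\mathbf{x}, \mathbf{x}\rangle = \sum_{n \geq 0} \lambda^{n}\,\langle T^{*n}\mathbf{x}, \mathbf{x}\rangle + \sum_{n \geq 0} \overline{\lambda}^{n}\,\langle T^{n}\mathbf{x}, \mathbf{x}\rangle - 1,
\end{equation*}
whose two $n=0$ terms contribute $\langle \mathbf{x}, \mathbf{x}\rangle + \langle \mathbf{x}, \mathbf{x}\rangle - 1 = 1$ since $\mathbf{x}$ is a unit vector. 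Consequently the vanishing of all moments with $n \geq 1$ is \emph{exactly} the assertion that $\langle K_{\lambda}(T)\mathbf{x}, \mathbf{x}\rangle \equiv 1$ for every $\lambda \in \D$. This equivalence is the pivot of the proof.

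To finish I would substitute this identity into the Herglotz representation \eqref{ee4rfsds}, yielding $\int_{\T} P_{\lambda}\,d\mu_{T, \mathbf{x}} \equiv 1$, and then compare against the Fourier expansion \eqref{98304ipweofdjsv}: matching the coefficients of $\lambda^{n}$ and $\overline{\lambda}^{n}$ forces the Fourier coefficients of $\mu_{T,\mathbf{x}}$ to satisfy $\widehat{\mu}(0)=1$ and $\widehat{\mu}(n)=0$ for all $n \neq 0$, which are precisely the Fourier coefficients of $m$. The converse runs backward through the same equivalences, using that the Poisson integral of the constant $1$ is $1$. The only point deserving a word of justification---and the closest thing to an obstacle---is the final uniqueness step: a finite Borel measure on $\T$ is determined by its Fourier coefficients, so matching all of them with those of $m$ gives $\mu_{T, \mathbf{x}} = m$. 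This is the standard uniqueness theorem underlying the Herglotz representation already used to produce $\mu_{T, \mathbf{x}}$, so no new machinery is needed.
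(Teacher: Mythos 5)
Your proof is correct and follows essentially the same route as the paper's: both pass from the vanishing of the moments $\langle T^{n}\mathbf{x},\mathbf{x}\rangle$ (together with the adjoint moments, via conjugate symmetry) through the Neumann expansion \eqref{oosdfp9} to the identity $\langle K_{\lambda}(T)\mathbf{x},\mathbf{x}\rangle \equiv 1$, and then use \eqref{98304ipweofdjsv} and uniqueness of Fourier coefficients of a measure to conclude $\mu_{T,\mathbf{x}} = m$. The only difference is that you spell out the coefficient-matching and the uniqueness step explicitly, which the paper leaves implicit.
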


For an inner function $u$, note that $T_u$ is an isometry, hence a contraction. Thus we can apply the above analysis to $\mu_{T_{u}, f}$.

\begin{Proposition}\label{bsfghy}
If $$f = \sum_{j \geq 1} v_j f_j(u)$$ is a vector from $H^2$ as in \eqref{009rsdsfqqqx}, then 
\begin{equation}\label{ppsdpfpsdf}
d \mu_{T_u, f} = \sum_{j \geq 1} |f_j|^2 \; dm.
\end{equation}
\end{Proposition}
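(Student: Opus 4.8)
The plan is to exploit the uniqueness built into the Herglotz representation \eqref{ee4rfsds}: by construction, $\mu_{T_u,f}$ is the \emph{unique} positive measure on $\T$ whose Poisson integral equals the harmonic function $\lambda \mapsto \langle K_\lambda(T_u) f, f\rangle$. Hence, writing $\nu := \sum_{j \geq 1} |f_j|^2\, dm$, it suffices to verify the single identity
\[
\langle K_\lambda(T_u) f, f\rangle = \int_\T P_\lambda(\xi)\, d\nu(\xi), \quad \lambda \in \D,
\]
whereupon the asserted equality $\mu_{T_u,f} = \nu$ follows at once.

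First I would record the moment identity. The computation \eqref{11199274} in the proof of Theorem \ref{StessinTinner} shows, for every $n \geq 0$ (the case $n = 0$ being the norm identity $\|f\|^2 = 1$), that
\[
\langle T_u^n f, f\rangle = \int_\T \xi^n \Big(\sum_{j \geq 1} |f_j(\xi)|^2\Big)\, dm(\xi) = \int_\T \xi^n\, d\nu(\xi).
\]
Since $\nu$ is a positive measure, conjugation gives $\langle T_u^{*n} f, f\rangle = \overline{\langle T_u^n f, f\rangle} = \int_\T \overline{\xi}^n\, d\nu(\xi)$. In the notation of \eqref{98304ipweofdjsv}, these say exactly that $\widehat{\nu}(-n) = \langle T_u^n f, f\rangle$ and $\widehat{\nu}(n) = \langle T_u^{*n} f, f\rangle$ for $n \geq 0$, with $\widehat{\nu}(0) = 1$.

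Next I would expand the left-hand side via the Neumann-series formula \eqref{oosdfp9} for $K_\lambda(T_u)$. Pairing that series against $f$ (the pairing may be moved inside the sum, since for fixed $\lambda = re^{i\theta}$ with $r < 1$ the contractions $T_u^n, T_u^{*n}$ make the series absolutely convergent) gives
\[
\langle K_\lambda(T_u) f, f\rangle = \sum_{n \geq 0} \lambda^n \langle T_u^{*n} f, f\rangle + \sum_{n \geq 0} \overline{\lambda}^n \langle T_u^n f, f\rangle - 1.
\]
Substituting the moment identities, peeling off the two $n=0$ terms (each equal to $1$), and matching the two resulting power series against the Fourier--Poisson expansion \eqref{98304ipweofdjsv} of $\nu$ shows that both sides equal $\int_\T P_\lambda\, d\nu$. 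This establishes the displayed identity and hence the proposition.

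The only delicate points are bookkeeping rather than conceptual: justifying the interchange of the inner product with the infinite Neumann series (routine, by absolute convergence for $|\lambda| < 1$); justifying the interchange of summation and integration when identifying $\widehat{\nu}(n)$ with $\sum_j \int_\T \overline{\xi}^n |f_j|^2\, dm$ (routine, since $\sum_j |f_j|^2$ converges in $L^1(\T,m)$); and observing that the moment formula from Theorem \ref{StessinTinner}, stated there for $n \geq 1$, extends to $n = 0$ and, by conjugation, governs the negatively indexed Fourier coefficients as well. I do not anticipate a genuine obstacle: the proposition amounts to the statement that the Herglotz measure of $T_u$ at $f$ has moments $\langle T_u^n f, f\rangle$, which is already delivered by the explicit computation \eqref{11199274}.
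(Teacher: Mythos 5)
Your proof is correct and follows essentially the same route as the paper's: both expand $\langle K_\lambda(T_u)f,f\rangle$ via the Neumann series \eqref{oosdfp9}, insert the moment identities $\langle T_u^n f, f\rangle = \int_\T \xi^n \sum_j |f_j|^2\,dm$ from the computation \eqref{11199274}, and conclude by uniqueness of the measure via \eqref{98304ipweofdjsv}. Your explicit attention to the convergence bookkeeping (absolute convergence of the Neumann series, $L^1$ convergence of $\sum_j |f_j|^2$) is a minor refinement the paper leaves implicit, but it is not a different argument.
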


\begin{proof} If
$$f = \sum_{j \geq 1} v_j f_j(u),$$ then 
$$\|f\|^2 = \sum_{j \geq 1} \|f_j\|^2 = \sum_{j \geq 1} \int_{\T} |f_j|^2 dm = \int_{\T} \sum_{j \geq 1} |f_j|^2 dm $$ and  the calculation used to prove Theorem \ref{StessinTinner} yields
$$\langle T_{u}^{n} f, f\rangle = \int_{\T} \xi^n\Big( \sum_{j \geq 1}|f_j(\xi)|^2 \Big) dm(\xi),$$
$$\langle T_{u}^{* n} f, f\rangle = \int_{\T} \overline{\xi}^n\Big( \sum_{j \geq 1}|f_j(\xi)|^2 \Big) dm(\xi).$$
From here we observe 
\begin{align*}
\int_{\T} P_{\lambda}(\xi) d \mu_{T_{u}, f}(\xi) & = 
 \langle K_{\lambda}(T_u) f, f\rangle\\
& = \sum_{n \geq 0} \lambda^n \langle T_{u}^{*n} f, f\rangle + \sum_{n \geq 0} \overline{\lambda}^n \langle T_{u}^{n} f, f\rangle - \langle f, f\rangle.\\
& = \sum_{n \geq 0} \lambda^n \int_{\T} \overline{\xi}^n \Big(\sum_{j \geq 1} |f_j(\xi)|^2\Big)dm(\xi)\\
& \quad  \quad + \sum_{n \geq 0} \overline{\lambda}^n  \int_{\T} \xi^n \Big(\sum_{j \geq 1} |f_j(\xi)|^2\Big)dm(\xi)\\
& \quad  \quad \quad - \sum_{j \geq 1} \int_{\T} |f_j(\xi)|^2 dm\\
& = \int_{\T} (\frac{1}{1 - \lambda \overline{\xi}} + \frac{1}{1 - \overline{\lambda} \xi} - 1) \sum_{j \geq 1} |f_j(\xi)|^2 dm(\xi)\\
& = \int_{\T} P_{\lambda}(\xi) \sum_{j \geq 1} |f_j(\xi)|^2 dm(\xi)
\end{align*}
Now use the uniqueness of the Fourier coefficients of a measure along with  \eqref{98304ipweofdjsv} to  obtain \eqref{ppsdpfpsdf}. 
\end{proof}

Notice how this gives us another way of thinking about Theorem \ref{StessinTinner}: a unit vector $f \in H^2$ is $T_{u}$-inner if and only if $\mu_{T_{u}, f}  = m$.  

This brings us to an interesting related question. One can also show that for any $f, g \in H^2$, we can define the harmonic function $\langle K_{\lambda}(T_u) f, g\rangle$ on $\D$ and prove this function also has bounded integral means. This yields, via Herglotz's theorem,  a complex valued measure $\mu_{T_u, f, g}$ on $\T$ for which 
\begin{equation}\label{vvx04trdas11qlfjgldfv}
\langle K_{\lambda}(T_u) f, g\rangle = \int_{\T} P_{\lambda}(\xi) d \mu_{T, f, g}(\xi), \quad \lambda \in \D.
\end{equation}
See \cite[Prop.~2.6]{MR2018850} for details. 
A similar calculation used to prove Proposition \ref{p354refvcxdw} shows that 
\begin{equation}\label{lllhhehe}
d \mu_{T_u, f, g} = \sum_{j \geq 1} f_j \overline{g_j} \; dm.
\end{equation}
 In the above formula, $f_j$ and $g_j$ come from the representations of $f$ and $g$ from \eqref{bbb6sfaa}. A general result from \cite{MR2050137} says that given any $F \in L^1$ and a non-constant inner function $u$ that is not an automorphism, there are $f, g \in H^2$ for which 
\begin{equation}\label{94378reoigfjlekadwq}
F(\zeta) = \frac{d \mu_{T_u, f, g}}{dm}(\zeta)
\end{equation}
$m$-almost everywhere. In the language of  \cite{MR2050137} this says that any $F \in L^1$ can be ``factored through $T_{u}$''. Equivalently stated, using \eqref{lllhhehe} and \eqref{94378reoigfjlekadwq}, we have 
$$F(\zeta) = \sum_{j \geq 1} f_j(\zeta) \overline{g_j(\zeta)}.$$
This is an interesting representation for $L^1$ functions and  a refinement of the  one from \cite{MR2050137}. 

\begin{Question}
Proposition \ref{bsfghy} shows that when $\phi$ is an inner function and $f, g \in H^2$, then $d \mu_{T_{\phi}, f, g}$ is absolutely continuous with respect to $m$. When $\phi \in b(H^{\infty})$ is this still the case? For this to be true we would need to know that 
$\langle \phi^n f, g\rangle, n \geq 1$, are the Fourier coefficients of an $L^1$ function. 
\end{Question}

\begin{comment}
\begin{Question}
Given an inner $u$, what non-negative $F \in L^1$ can be represented as 
$$F(\zeta)  = \sum_{j \geq 1} |f_j(\zeta)|^2$$ for an $f \in H^2$ written as 
$$f = \sum_{j \geq 1} v_j f_j(u).$$
\end{Question}

When $u(z) = z$, then the expansion in \eqref{009rsdsfqqqx} is trivial and the question becomes when can a positive $L^1$ function be written as $F = |f|^2$ for some $f \in H^2$. Certainly $\log F$ must be integrable and using outer functions this is the only condition for such a representation of $F  = |f|^2$ for an $f \in H^2$. When $u$ is a finite Blaschke product of degree $d$, the question become one of when we can write a non-negative $F \in L^1$ as 
$$F = \sum_{j = 1}^{d} |f_j|^2$$ on $\T$ for some $f_1, \ldots, f_d \in H^2$. 
\end{comment}

\section{Inner vectors via Clark measures} 

 For any fixed $\alpha \in \T$ and inner function $u$, the function 
$$z \mapsto \frac{1  - |u(z)|^2}{|\alpha - u(z)|^2} = \Re\Big(\frac{\alpha + u(z)}{\alpha - u(z)}\big)$$ is a positive harmonic function on $\D$. Thus by Herglotz's theorem, there is a unique positive measure $\sigma_{\alpha}$ on $\T$ for which 
$$\frac{1  - |u(z)|^2}{|\alpha - u(z)|^2} = \int_{\T} P_{\lambda}(\xi) d \sigma_{\alpha}(\xi).$$ The family of measures 
$\{\sigma_{\alpha}: \alpha \in \T\}$ is called the family of {\em Clark measures} corresponding to $u$. Let us record some important facts about this family of measures. Proofs can be found in \cite{MR2215991}.

First, one can use the fact that $u$ is an inner function, along with standard harmonic analysis, to prove that each $\sigma_{\alpha}$ is singular with respect to $m$. Second, if $E_{\alpha}$ is defined to be the set of $\xi \in \T$ for which 
$$\lim_{r \to 1^{-}} u(r \xi) = \alpha,$$ then $E_{\alpha}$ is a Borel subset of $\T$ with
\begin{equation}\label{carrier}
\sigma_{\alpha}(\T \setminus E_{\alpha}) = 0.
\end{equation}
 In other words, $\sigma_{\alpha}$ is ``carried'' by $E_{\alpha}$. From this we also see that the measures $\{\sigma_{\alpha}: \alpha \in \T\}$ are singular with respect to each other. Third, a  beautiful disintegration theorem of Aleksandrov says that if $g \in L^1$ then for $m$-almost every $\alpha  \in \T$, integral 
$$\int_{\T} g(\xi) d\sigma_{\alpha}(\xi)$$ is well defined. Moreover this almost everywhere defined function 
$$\alpha \mapsto \int_{\T} g(\xi) d\sigma_{\alpha}(\xi)$$ is integrable with respect to $m$ and 
\begin{equation}\label{ADT}
\int_{\T} \Big(\int_{\T} g(\xi) d\sigma_{\alpha}(\xi)\Big)dm(\alpha) = \int_{\T} g(\zeta) dm(\zeta).
\end{equation}

Using Clark measures, we can use a technique from \cite{MR1969797} to compute a formula for $\langle K_{\lambda}(T_{u}) f, f\rangle$ along with the measure $d \mu_{T_u, f}/d m$. This gives us another way to think about the formula \eqref{94378reoigfjlekadwq}. The result here is the following. 

\begin{Theorem}\label{rueiydas}
For an inner function $u$ and $f \in H^2$ we have 
$$d \mu_{T_u, f}(\alpha)  = \Big(\int_{\T} |f(\xi)|^2 d\sigma_{\alpha}(\xi) \Big)\; dm(\alpha).$$
\end{Theorem}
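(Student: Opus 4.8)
The plan is to verify that the measure $\big(\int_{\T} |f|^2\, d\sigma_\alpha\big)\, dm(\alpha)$ has the same Poisson integral as $\mu_{T_u, f}$ for every $\lambda \in \D$, and then invoke the uniqueness in Herglotz's theorem (equivalently, the uniqueness of Fourier coefficients via \eqref{98304ipweofdjsv}) to conclude that the two measures coincide. First I would check that the candidate density is genuinely in $L^1(m)$: applying the disintegration formula \eqref{ADT} to $g = |f|^2$ gives $\int_{\T} \big(\int_{\T} |f|^2\, d\sigma_\alpha\big)\, dm(\alpha) = \|f\|^2 < \infty$, so the right-hand side is a well-defined finite positive measure (a probability measure when $\|f\| = 1$, matching $\mu_{T_u, f}$).

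Next I would compute the left-hand side. Starting from the Neumann series \eqref{oosdfp9} for $K_\lambda(T_u)$ and using $\langle T_u^n f, f\rangle = \int_{\T} u^n |f|^2\, dm$ together with $\langle T_u^{*n} f, f\rangle = \int_{\T} \overline{u}^n |f|^2\, dm$, summing the two geometric series (legitimate since $\sum_n |\lambda|^n < \infty$ and $|f|^2 \in L^1$) yields
$$\langle K_\lambda(T_u) f, f\rangle = \int_{\T} \Big(\frac{1}{1 - \overline{\lambda} u(\xi)} + \frac{1}{1 - \lambda \overline{u(\xi)}} - 1\Big)|f(\xi)|^2\, dm(\xi) = \int_{\T} P_\lambda(u(\xi))\, |f(\xi)|^2\, dm(\xi),$$
where the last equality is just formula \eqref{PK} evaluated at the (almost everywhere unimodular) boundary value $u(\xi)$. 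By \eqref{ee4rfsds} the far-left quantity is $\int_{\T} P_\lambda\, d\mu_{T_u, f}$, so it remains to identify the integral on the right with $\int_{\T} P_\lambda(\alpha)\big(\int_{\T} |f|^2\, d\sigma_\alpha\big)\, dm(\alpha)$.

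Here is where the Clark machinery enters. I would apply the Aleksandrov disintegration \eqref{ADT} to the function $g(\xi) = P_\lambda(u(\xi))|f(\xi)|^2$, which lies in $L^1(m)$ because $P_\lambda$ is bounded on $\T$ for fixed $\lambda \in \D$; this rewrites $\int_{\T} P_\lambda(u)|f|^2\, dm$ as $\int_{\T} \big(\int_{\T} P_\lambda(u(\xi))|f(\xi)|^2\, d\sigma_\alpha(\xi)\big)\, dm(\alpha)$. The crucial point is then the carrier property \eqref{carrier}: since $\sigma_\alpha$ is carried by $E_\alpha = \{\xi : \lim_{r\to 1^-} u(r\xi) = \alpha\}$, we have $u(\xi) = \alpha$ for $\sigma_\alpha$-almost every $\xi$, so $P_\lambda(u(\xi)) = P_\lambda(\alpha)$ holds $\sigma_\alpha$-a.e.\ and the constant $P_\lambda(\alpha)$ pulls out of the inner integral, leaving $P_\lambda(\alpha)\int_{\T} |f|^2\, d\sigma_\alpha$. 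Combining the displays gives exactly the desired equality of Poisson integrals for every $\lambda \in \D$, and uniqueness finishes the argument.

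I expect the main obstacle to be conceptual rather than computational: recognizing that the disintegration \eqref{ADT} should be applied to $P_\lambda(u)|f|^2$ rather than to $|f|^2$ alone, since a naive Fubini interchange is unavailable (the measure $\sigma_\alpha$ itself depends on $\alpha$). Once this is set up, the carrier property \eqref{carrier} — the fact that $\sigma_\alpha$ is supported on the level set where the boundary function $u$ equals $\alpha$ — is precisely what converts $P_\lambda(u(\xi))$ into the constant $P_\lambda(\alpha)$ and collapses the double integral into the claimed form. The remaining points, namely the $L^1$ membership needed to invoke \eqref{ADT} and the absolute convergence justifying the term-by-term integration of the series \eqref{oosdfp9}, are routine.
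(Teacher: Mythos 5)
Your proof is correct and takes essentially the same route as the paper: both arguments rest on Aleksandrov's disintegration formula \eqref{ADT} combined with the carrier property \eqref{carrier} of the Clark measures, and both conclude by matching Poisson integrals and invoking uniqueness via \eqref{98304ipweofdjsv}. The only difference is cosmetic --- the paper disintegrates each moment $\langle T_u^n f, f\rangle$ separately and then assembles the Neumann series, whereas you sum the series first (producing $P_\lambda(u(\xi))$ under the $dm$ integral) and apply the disintegration once.
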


\begin{proof}
For any $f \in H^2$ use the formulas from \eqref{carrier} and \eqref{ADT} to obtain 
\begin{align*}
\langle T_{u}^{n} f, f\rangle & = \int_{\T} |f(\xi)|^2 u(\xi)^{n} dm(\xi)\\
& = \int_{\T} \Big(\int_{\T} |f(\xi)|^2 u(\xi)^{n} d \sigma_{\alpha}(\xi)\Big) dm(\alpha)\\
& = \int_{\T} \Big(\int_{\T} |f(\xi)|^2 \alpha^n d \sigma_{\alpha}(\xi)\Big) dm(\alpha)\\
& = \int_{\T} \alpha^n \Big(\int_{\T} |f(\xi)|^2 d \sigma_{\alpha}(\xi)\Big) dm(\alpha).
\end{align*}
In a similar way 
$$\langle T_{u}^{*n} f, f\rangle = \int_{\T} \overline{\alpha}^n \Big(\int_{\T} |f(\xi)|^2 d \sigma_{\alpha}(\xi)\Big) dm(\alpha).$$
Now follow the proof of Proposition \ref{bsfghy} to get 
\begin{align*}
& \int_{\T} P_{\lambda}(\xi) d \mu_{T_{u}, f}(\xi)\\
& = 
 \langle K_{\lambda}(T_u) f, f\rangle\\
& = \sum_{n \geq 0} \lambda^n \langle T_{u}^{*n} f, f\rangle + \sum_{n \geq 0} \overline{\lambda}^n \langle T_{u}^{n} f, f\rangle - \langle f, f\rangle\\
& = \int_{\T}\left( \Big(\frac{1}{1 - \lambda \overline{\alpha}} + \frac{1}{1 - \overline{\lambda} \alpha} - 1\Big) \Big(\int_{\T} |f(\xi)|^2 d\sigma_{\alpha}(\xi)\Big)\right) dm(\alpha)\\
& = \int_{\T} P_{\lambda}(\alpha) \Big(\int_{\T} |f(\xi)|^2 d\sigma_{\alpha}(\xi)\Big) dm(\alpha).
\end{align*}
Use \eqref{98304ipweofdjsv} along with the uniqueness of Fourier coefficients of a measure to compute the proof. 
\end{proof}

Combing  Theorem \ref{rueiydas} and Proposition \ref{p354refvcxdw} yields the following result from \cite{MR1969797}.

\begin{Corollary}
A unit vector $f \in H^2$ is $T_{u}$-inner if and only if 
$$\int_{\T} |f(\xi)|^2 d\sigma_{\alpha}(\xi) = 1$$ for $m$-almost every $\alpha \in \T$. 
\end{Corollary}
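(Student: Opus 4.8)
The plan is to combine the two results immediately preceding the statement, since the corollary is essentially the conjunction of their two characterizations. First I would invoke Proposition \ref{p354refvcxdw} with the contraction $T = T_u$. This is legitimate because an inner $u$ makes $T_u$ an isometry, hence a contraction, so the operator-valued Poisson kernel $K_\lambda(T_u)$ and the associated Herglotz probability measure $\mu_{T_u, f}$ from \eqref{ee4rfsds} are all available. Proposition \ref{p354refvcxdw} then tells us that a unit vector $f \in H^2$ is $T_u$-inner if and only if $\mu_{T_u, f} = m$.

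Next I would substitute the explicit description of $\mu_{T_u, f}$ furnished by Theorem \ref{rueiydas}, namely that $\mu_{T_u, f}$ is absolutely continuous with respect to $m$ with density
$$
g(\alpha) := \int_{\T} |f(\xi)|^2 \, d\sigma_\alpha(\xi),
$$
a function that is defined for $m$-almost every $\alpha$ and lies in $L^1(m)$ by Aleksandrov's disintegration formula \eqref{ADT} applied to the integrable function $|f|^2$. Thus $d\mu_{T_u, f}(\alpha) = g(\alpha)\, dm(\alpha)$.

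Finally, the equality $\mu_{T_u, f} = m$ from the first step becomes $g\, dm = dm$. Since both sides are absolutely continuous with respect to $m$, uniqueness of the Radon--Nikodym derivative forces $g = 1$ $m$-almost everywhere, which is exactly the asserted condition $\int_{\T} |f(\xi)|^2 \, d\sigma_\alpha(\xi) = 1$ for $m$-a.e.\ $\alpha \in \T$. Conversely, $g = 1$ a.e.\ gives back $\mu_{T_u, f} = m$, and hence $T_u$-innerness, so the equivalence runs in both directions.

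I expect no serious obstacle here: the genuine analytic content---the disintegration through the Clark measures $\sigma_\alpha$, resting on Aleksandrov's theorem and the carrier property \eqref{carrier}---has already been absorbed into Theorem \ref{rueiydas}, and what remains is a routine translation between ``the Herglotz measure equals $m$'' and ``its density equals $1$.'' The only point deserving a word of care is the $m$-a.e.\ well-definedness and $L^1$-integrability of $\alpha \mapsto \int_{\T} |f|^2 \, d\sigma_\alpha$, but this is furnished verbatim by \eqref{ADT}.
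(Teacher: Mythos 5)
Your proposal is correct and is precisely the paper's own argument: the paper derives this corollary by ``combining Theorem \ref{rueiydas} and Proposition \ref{p354refvcxdw},'' which is exactly your two-step substitution of the Clark-measure density into the criterion $\mu_{T_u,f}=m$. The extra care you take with the Radon--Nikodym uniqueness and the $L^1$ well-definedness via \eqref{ADT} is sound but already implicit in those two results.
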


Recall the notation from \eqref{vvx04trdas11qlfjgldfv} that for a given inner function $u$ and $f, g \in H^2$
$$\langle K_{\lambda}(T_u) f, g\rangle = \int_{\T} P_{\lambda}(\xi) d \mu_{T_u, f, g}(\xi).$$
Moreover, if $\operatorname{deg}(u) \geq 2$, any $F \in L^1$ can be written as $d \mu_{T_u, f, g}(\xi)/dm$ for some $f, g \in H^2$. Here is another way of thinking about this via Clark measures. The same argument used to prove Theorem \ref{rueiydas} shows that 
\begin{equation}\label{r438rfuibvbdjf}
d \mu_{T_u, f, g} = \int_{\T} f(\xi) \overline{g(\xi)} d\sigma_{\alpha}(\xi) \; dm
\end{equation}
 Since any $F \in L^1$ is equal to $d \mu_{T_u, f, g}/dm$ for some $f, g \in H^2$ \cite{MR2050137}, we see that any $F \in L^1$ can be written as 
$$F(\alpha) =  \int_{\T} f(\xi) \overline{g(\xi)} d\sigma_{\alpha}(\xi).$$ This Clark measure viewpoint has the additional feature, via Aleksandrov's theorem, that 
\begin{align*}
\int_{\T} F(\alpha) dm(\alpha) & = \int_{\T} \Big(\int_{\T} f(\xi) \overline{g(\xi)} d\sigma_{\alpha}(\xi)\Big) dm(\alpha)\\
& = \int_{\T} f(\zeta) g(\zeta) dm(\zeta).
\end{align*}

\begin{Example}\label{ooooo9o9}
If $u$ is a finite Blaschke product of degree $d$ and $\alpha \in \T$, then one can compute (see \cite[p.~209]{MR2215991} for the details) the Clark measure to be 
$$d\sigma_{\alpha} = \sum_{j = 1}^{d} \frac{1}{|u'(\zeta_j)|} \delta_{\zeta_j} ,$$
where $\zeta_1, \ldots, \zeta_d$ are the $d$ distinct solutions to the equation $u(z) = \alpha$ and $\delta_{\zeta_j}$ is the unit point pass as $\zeta_j$. The denominators in the above expression may look troublesome but at the end of the day we have  $u' \not = 0$ on $\T$.  By Theorem \ref{rueiydas} we see that 
$$\frac{d \mu_{T_u, f}}{d m}(\alpha) = \int_{\T} |f(\xi)|^2 d\sigma_{\alpha}(\xi) = \sum_{j = 1}^{d} \frac{|f(\zeta_j)|^2}{|u'(\zeta_j)|}.$$ Thus the criterion for a unit vector $f \in H^2$ to be a $T_u$-inner vector is that the above sum is equal to $1$ for $m$-almost every $\alpha \in \T$. 

Furthermore, by \eqref{r438rfuibvbdjf}, given $F \in L^1$, there are $f, g \in H^2$ so that 
$$F(\alpha) = \sum_{j = 1}^{d} \frac{f(\zeta_j) \overline{g(\zeta_j)}}{|u'(\zeta_j)|}$$ for $m$-almost every $\alpha \in \T$.  This formula appears in \cite{MR2050137}.
\end{Example}

\begin{Example}
Let us apply this to the simple case where $u(z) = z^2$. Given any $\alpha \in \T$, the two solutions $\zeta_1, \zeta_2$ to the equation $z^2 = \alpha$ are 
$$\zeta_1 = e^{i \arg \alpha/2}, \quad \zeta_2 = - e^{i \arg \alpha/2}.$$ Thus the condition that a unit $f$ is a $T_{z^2}$-inner vector becomes 
$$|f(e^{i \arg \alpha/2})|^2 + |f(-e^{i \arg \alpha/2})|^2 = 2, \quad \mbox{$m$-a.e. $\alpha \in \T$}.$$ Furthermore, given any $F \in L^1$, there are $f, g \in H^2$ for which 
$$F(\alpha) = \tfrac{1}{2} f(e^{i \arg \alpha/2}) \overline{g(e^{i \arg \alpha/2})} + \tfrac{1}{2} f(-e^{i \arg \alpha/2}) \overline{g(-e^{i \arg \alpha/2})}.$$ This second fact was first observed in \cite{MR2050137}.
\end{Example}

\begin{Example}
Consider the atomic inner function 
$$u(z) = \exp\Big(\frac{z + 1}{z - 1}\Big).$$
For a fixed $t \in [0, 2 \pi)$, the solutions to $u(z) = e^{i t}$ are 
$$\zeta_k = \frac{ i (t + 2 \pi k) + 1}{i (t + 2 \pi k) - 1}, \quad k \in \mathbb{Z}.$$
Noting that 
$$|u'(\zeta_k)| = \frac{2}{|\zeta_k - 1|^2},$$
a similar computation as in Example \ref{ooooo9o9} shows that 
$$d\sigma_{e^{i t}} = \frac{1}{2} \sum_{k \in \mathbb{Z}} \delta_{\zeta_k} |\zeta_k - 1|^2.$$
Thus 
\begin{align*}
\frac{d \mu_{T_{u}, f}}{d m}(e^{i t}) & = \int_{\T} |f(\xi)|^2 d\sigma_{e^{i t}}(\xi)\\
& = \frac{1}{2} \sum_{k \in \mathbb{Z}} |f(\zeta_k)|^2 |\zeta_k - 1|^2\\
& = \sum_{k\in\mathbb{Z}}\Big| f\Big( \frac{i[t+2\pi k]+1}{i[t+2\pi k]-1}  \Big)  \Big|^2\, \frac{2}{|i(t+2\pi k) -1|^2}.
\end{align*}
To create a $T_{u}$-inner function, we need to find a unit vector $f \in H^2$ so that the above expression is equal to one for almost every $t$. Let us create a specific example of when this happens. In fact we can even make $f$ unbounded. We already knew we could do this from Corollary \ref{unbounded28475423pqeow} but our example  below will be explicit, while the proof of Corollary \ref{unbounded28475423pqeow} needed Grothendieck's theorem and is not an explicit construction. 

To see how to do this, fix $\beta \in (\tfrac{1}{2},1)$, and let $a_k$, $k\in \mathbb{Z}$, be the collection of coefficients 
\begin{equation}\label{akassump}
         a_k =  \frac{1}{ 1+|k|^{\beta}}.
\end{equation}
Note that $\sum_{k \in \mathbb{Z}} |a_k|^2 < \infty$.

Let $I_k$ be the indicator function of the interval $[-\pi + 2\pi k, \pi +2\pi k)$, $k\in\mathbb{Z}$.  Now define $F$ on $\mathbb{T}$ by
\[
       F(e^{i\theta}) := \sqrt{2} \sum_{k\in\mathbb{Z}} \frac{a_k}{e^{i\theta}-1}\,I_k\Big(  i\frac{1+e^{i\theta}}{1-e^{i\theta}} \Big).
\]
Then
\begin{align*}
     \int_{\mathbb{T}}|F|^2\, dm &=  2 \int_{-\pi}^{\pi} |F(e^{i\theta})|^2 \,\frac{d\theta}{2\pi} \\
     &= 2\sum_{k\in\mathbb{Z}}\int_{-\pi}^{\pi}  \frac{|a_k|^2}{|e^{i\theta}-1|^2}\,I_k\Big(  i\frac{1+e^{i\theta}}{1-e^{i\theta}} \Big)\,\frac{d\theta}{2\pi}\\
     &= 2\sum_{k\in\mathbb{Z}}\int_{-\infty}^{\infty}  |a_k|^2 \frac{| it-1 |^2}{2^2}\,I_k(t)\,\frac{2\,dt}{2\pi|it-1|^2}\\
     &= \sum_{k\in\mathbb{Z}}\int_{-\pi}^{\pi}  |a_k|^2 \big| i[t+2\pi k]-1 \big|^2\,\frac{dt}{2\pi|i[t+2\pi k]-1|^2}\\
     &= \sum_{k\in\mathbb{Z}} |a_k|^2 < \infty,
     %&= \sum_{k\in\mathbb{Z}} \frac{1}{(1 + |k|^{\beta})^2} < \infty,
   %  &\leq |a_0|^2 + 2 \sum_{k=1}^{\infty} \frac{1}{36\pi^2 k^2}\\
\end{align*}
i.e., $F$ is square integrable on $\mathbb{T}$ with 
\begin{equation}\label{3489ureiogfdsa}
\|F\|^{2} =  \sum_{k \in \Z} |a_k|^2.
\end{equation}

Next we establish that $\log|F|$ is integrable.  We'll need the following estimates, which hold for all $k\neq 0$.  First note that for $k \neq 0$,
\begin{align*}
      |a_k| \big| i(t+2\pi k) - 1  \big| &= |a_k| \big(  [\pi + 2\pi |k|]^2 + 1 \big)^{1/2}  \\
           &\geq |a_k| \cdot 2\pi |k| \\
           &\geq \frac{2\pi |k|}{1+|k|^{\beta}}\\
           &\geq 1.
\end{align*}
Consequently, for $k \neq 0$ and $t \in [-\pi, \pi)$,
\begin{align*}
    \Big|\log\Big( |a_k| \big| i(t+2\pi k) - 1\Big)\Big| &= \log { |a_k| | i(t+2\pi k)- 1|}  \\
       &\leq \log\frac{ | i(\pi+2\pi |k|)- 1|}{1 + |k|^{\beta} } \\
       &\leq \log\frac{( [ 2\pi(|k|+1/2)]^2 + 1)^{1/2}}{1 + |k|^{\beta} } \\
%              &\leq \log\frac{( [ 2\pi(|k|+1/2)]^2 + 1)^{1/2}}{1 + |k|^{\beta} } \\
%       &\leq \log\frac{1}{\pi |a_k| |k|} \\
       &\leq \log\frac{( [ 2\pi(|k|+|k|/2)]^2 + |k|^2)^{1/2}}{|k|^{\beta} } \\
       &\leq \log ( |k|^{1-\beta} \sqrt{9 \pi^2 + 1}).
\end{align*}

We now have
\begin{align*}
     & \int_{\mathbb{T}}\big| \log |F| \big| \, dm\\ &=  \int_{-\pi}^{\pi} \Big| \log|F(e^{i\theta})|\Big| \,\frac{d\theta}{2\pi} \\
      &= \sum_{k\in\mathbb{Z}}\int_{-\pi}^{\pi}  \Big|\log\frac{|a_k|\sqrt{2}}{|e^{i\theta}-1|}\Big|\,I_k\Big(  i\frac{1+e^{i\theta}}{1-e^{i\theta}} \Big)\,\frac{d\theta}{2\pi}\\
      &= \sum_{k\in\mathbb{Z}}\int_{-\infty}^{\infty} \Big| \log\big(|a_k| \big| it-1 \big|\sqrt{2}/2\big)\Big| \,I_k(t)\,\frac{dt}{2\pi|it-1|^2}\\
      &= \sum_{k\in\mathbb{Z}}\int_{-\pi}^{\pi} \Big| \log\big(6\pi |k|^{1-\beta} \big| i[t+2\pi k]-1 \big|/\sqrt{2}\big)\Big|\,\frac{dt}{2\pi|i[t+2\pi k]-1|^2}.\\
%      &\leq 2\pi \log\big(|a_0|\big|i\pi-1\big|\big) + 2 \sum_{k=1}^{\infty} \frac{ k^{\beta}}{2 \pi^2 k^2}, \\
\end{align*}
The series is summable, because the terms behave like $(\log |k|)/|k|^{2}$.

It follows that there exists an outer function $g \in H^2$ with radial limit function satisfying $|g| = |F|$ almost everywhere on $\mathbb{T}$, namely
\[
     g(z) := \exp\Big(\int_{\mathbb{T}} \frac{e^{i\theta}+z}{e^{i\theta}-z} \log |F(e^{i\theta})|\, dm(e^{i\theta})\Big).
\]  

Finally, let $J$ be any classical inner function, and define $f = gJ$.  Then
\begin{align*}
 \frac{d \mu_{T_{u}, f}}{d m}(e^{i t})
       &= \sum_{k\in\mathbb{Z}}\Big| f\Big( \frac{i[t+2\pi k]+1}{i[t+2\pi k]-1}  \Big)  \Big|^2\, \frac{2}{|i(t+2\pi k) -1|^2}\\ &= 
              \sum_{k\in\mathbb{Z}}\Big| F\Big( \frac{i[t+2\pi k]+1}{i[t+2\pi k]-1}  \Big)  \Big|^2\, \frac{2}{|i(t+2\pi k) -1|^2}\\
               &= 
              \sum_{k\in\mathbb{Z}}\frac{|a_k( i[t+2\pi k]-1)\sqrt{2}|^2}{2^2}\, \frac{2}{|i(t+2\pi k) -1|^2}\\
   &=  \sum_{k\in\mathbb{Z}} |a_k|^2.
\end{align*}
Notice from \eqref{3489ureiogfdsa} that 
$$ \frac{d \mu_{T_{u}, f}}{d m}(e^{i t}) = \|F\|^2$$ and so one can scale $F$ so that it (and hence $f$) is a unit vector
This also gives us $d \mu_{T_u, f}/dm(e^{i t}) = 1$ for almost every $t$. Any such $f$ will be a $T_u$-inner function. 

As a bonus, we get that the $f$ we just constructed is unbounded. To see this, note that $F$ is unbounded, since for $\theta$ approaching zero, $F(e^{i\theta})$ takes values
\[
      F\Big( \frac{i[t+2\pi k]+1}{i[t+2\pi k]-1}  \Big) = \frac{a_k}{1 - \frac{i[t+2\pi k]+1}{i[t+2\pi k]-1}} = \frac{-i[t+2\pi k]+1}{2 + 2|k|^{\beta}}
\]
where $t \in [-\pi,\pi)$.  Since $\beta < 1$, this expression is unbounded as $|k| \to \infty$.
\end{Example}

\section{Inner vectors in model spaces}

In this section we depart slightly from Toeplitz operators on $H^2$ to the related topic of compressions of Toeplitz operators on model spaces. For an inner function $\Theta$, recall the model space $\K_{\Theta} = (\Theta H^2)^{\perp}$. An important operator to study here is the {\em compressed shift operator} 
$$S_{\Theta}: \K_u \to \K_u, \quad S_{\Theta} f = P_{\Theta}(z f),$$ where $P_{\Theta}$ is the orthogonal projection of $L^2$ onto $\K_u$. This operator is used to model a certain class of contraction operators on Hilbert space \cite[Ch.~9]{MR3526203} -- hence the use of the phrase ``model space.''

As a generalization of our discussion of classifying the $T_z$-inner vectors in $H^2$, one can ask for a description of the $S_{\Theta}$-inner vectors in $\K_{\Theta}$, i.e., those unit vectors $f \in \K_{\Theta}$ for which 
$$\langle S_{\Theta}^{n} f, f\rangle = 0, \quad n \geq 1.$$
Before continuing, let us make a few comments about $S_{\Theta}$. For the proofs, see \cite[Ch.~9]{MR3526203}. First note that since $S_{\Theta}$ is a compression of $T_z$ to $\K_{\Theta}$ we have the identity 
$$S_{\Theta}^{n} = P_{\Theta} T_{z^n}|_{\K_{\Theta}}.$$
Furthermore, we have the adjoint formula 
$$S_{\Theta}^{*} = T_{\overline{z}}|_{\K_{\Theta}}.$$
For any $\phi \in H^{\infty}$ there is the functional calculus for $S_{\Theta}$ which allows us to define 
$$\phi(S_{\Theta}) = P_{\Theta} T_{\phi}|_{\K_{\Theta}}$$ along with the adjoint formula 
$$\phi(S_{\Theta})^{*} = P_{\Theta} T_{\overline{\phi}}|_{\K_{\Theta}}.$$

One can actually compute the $S_{\Theta}$-inner vectors with the following result from \cite[p.~177]{MR3526203}. 

\begin{Theorem}\label{poeriepotr}
Any $S_{\Theta}$-inner function is an inner function. Moreover, $\K_{\Theta}$ contains an inner function if and only if $u(0) = 0$ and the inner functions belonging to $\K_{\Theta}$ are precisely the inner divisors of $\Theta(z)/z$. 
\end{Theorem}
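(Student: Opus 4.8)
The plan is to first reduce the notion of an $S_{\Theta}$-inner vector to a concrete condition on the boundary modulus $|f|$, and then to translate ``$f \in \K_{\Theta}$ and $f$ inner'' into a divisibility statement about $\Theta$.

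First I would exploit the identity $S_{\Theta}^{n} = P_{\Theta} T_{z^n}|_{\K_{\Theta}}$ recorded above. Since $P_{\Theta}$ is an orthogonal projection and $f \in \K_{\Theta}$, for every $n \geq 1$
\[
\langle S_{\Theta}^{n} f, f\rangle = \langle P_{\Theta}(z^n f), f\rangle = \langle z^n f, P_{\Theta} f\rangle = \langle z^n f, f\rangle = \int_{\T} \xi^n |f(\xi)|^2\, dm(\xi).
\]
Thus the $S_{\Theta}$-inner condition says exactly that $\int_{\T}\xi^n|f|^2\,dm = 0$ for all $n \geq 1$; conjugating gives the same for $n \leq -1$, and the unit-norm hypothesis supplies the $n=0$ coefficient. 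By uniqueness of Fourier coefficients, $|f|^2 = 1$ a.e.\ on $\T$. An $H^2$ function with unimodular boundary values is bounded, hence lies in $H^{\infty}$ and is inner; this proves the first assertion. The converse is immediate: if $f \in \K_{\Theta}$ is inner then $|f| = 1$ a.e., so the integrals above vanish for $n \geq 1$ and $f$ is $S_{\Theta}$-inner. Hence the $S_{\Theta}$-inner vectors are precisely the inner functions lying in $\K_{\Theta}$, and it remains to identify these.

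The key tool for the second part is the boundary characterization $\K_{\Theta} = \{f \in H^2 : \overline{\Theta} f \in \overline{z H^2}\}$, which follows by writing $f \perp \Theta H^2$ as $\overline{\Theta} f \perp H^2$ in $L^2(\T)$ and recalling $(H^2)^{\perp} = \overline{z H^2}$. Given an inner $f \in \K_{\Theta}$, set $k := \Theta \overline{f}$ on $\T$; the membership condition says $k \in z H^2$, while $|\Theta| = |f| = 1$ a.e.\ forces $|k| = 1$ a.e., so $k$ is itself an inner function with $k(0) = 0$. Multiplying $k = \Theta\overline{f}$ by $f$ and using $|f|^2 = 1$ gives the analytic identity $\Theta = k f$. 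Since $k$ is inner and vanishes at the origin, $z \mid k$, whence $\Theta(0) = 0$ and $f$ divides $\Theta/z$.

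For the reverse inclusion I would start from $\Theta(0) = 0$, write $\Theta = z\Theta_1$ with $\Theta_1$ inner, and let $f$ be any inner divisor of $\Theta/z = \Theta_1$, say $\Theta_1 = f g$ with $g$ inner. Taking $h := z g \in z H^2$, one checks on $\T$ that $\Theta \overline{h} = z f g\, \overline{z g} = f$ (using $|z| = |g| = 1$), so $\overline{\Theta} f = \overline{h} \in \overline{z H^2}$ and therefore $f \in \K_{\Theta}$ by the characterization above. Combined with the forward direction --- which shows that no inner function can lie in $\K_{\Theta}$ unless $\Theta(0) = 0$ --- this gives the stated equivalence and the exact description of the inner functions in $\K_{\Theta}$. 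The main obstacle I anticipate is the careful passage between identities that hold only as boundary functions on $\T$ and genuine divisibility of inner functions in $H^{\infty}$; the crucial leverage is that unimodularity of $\Theta$, $f$, and $k$ on $\T$ lets one replace conjugates by reciprocals and promote the relation $\Theta = kf$ to a holomorphic factorization.
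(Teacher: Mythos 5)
Your proof is correct. Note, however, that the paper itself gives no proof of Theorem \ref{poeriepotr}: it quotes the result from \cite[p.~177]{MR3526203}, so there is no in-paper argument to compare against. Your route is the standard one and is, in substance, the argument of the cited reference: (i) use the semi-invariance identity $S_{\Theta}^{n} = P_{\Theta}T_{z^n}|_{\K_{\Theta}}$ together with self-adjointness of $P_{\Theta}$ and $P_{\Theta}f = f$ to identify $\langle S_{\Theta}^{n}f, f\rangle$ with the Fourier coefficients of $|f|^2$, so that the $S_{\Theta}$-inner condition plus $\|f\|=1$ forces $|f|=1$ a.e.\ and hence (via inner-outer factorization, or the Smirnov-class bound $\log|f(z)| \leq \int_{\T} P_{z}\log|f|\,dm = 0$) $f$ is inner; (ii) use the boundary description $\K_{\Theta} = \{f \in H^2 : \overline{\Theta}f \in \overline{zH^2}\}$ to convert ``inner and in $\K_{\Theta}$'' into the holomorphic factorization $\Theta = kf$ with $k$ inner, $k(0)=0$, which gives both $\Theta(0)=0$ and $f \mid \Theta/z$, and to run the computation $\overline{\Theta}f = \overline{zg}$ for the converse inclusion. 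All the individual steps check out, including the passage from boundary identities to genuine factorizations, which you correctly justify by unimodularity and uniqueness of boundary values for $H^2$ functions. One small point: the theorem's statement contains a typo ($u(0)=0$ should read $\Theta(0)=0$), which your argument silently and correctly repairs.
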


So now the question becomes the following. 

\begin{Question}
What are the $\phi(S_{\Theta})$-inner functions?
\end{Question}

As we did before with Toeplitz operators, we focus our attention on the case where $\phi$ is inner. It is clear that the inner vectors for $\phi(S_{\Theta})$ are the same as those for $\phi(S_{\Theta})^{*}$. As observed with an analogous result in Proposition \ref{6iehtg43tnnncknkn}, we see that any (unit) vector in $\ker \phi(S_{\Theta})^{*}$ is a $\phi(S_{\Theta})^{*}$-inner vector. It is well-known \cite{MR3526203} that (assuming $\phi$ is an inner function)
$$\ker \phi(S_{\Theta})^{*} = \K_{\Theta} \cap \K_{\phi} = \K_{\operatorname{gcd}(\Theta, \phi)},$$
where $\operatorname{gcd}(\Theta, \phi)$ is the greatest common inner divisor of the inner functions $\Theta$ and $\phi$. 

At this point, it might the case that $\operatorname{gcd}(\Theta, \phi)$ is a unimodular constant function whence $\K_{\operatorname{gcd}(\Theta, \phi)} = \{0\}$ and it is not clear as to whether or not there are any $\phi(S_{\Theta})$-inner vectors.

\begin{Question}
We know that if $\operatorname{gcd}(\Theta, \phi)$ is non-constant, then there are $\phi(S_{\Theta})$-inner vectors. Is the converse true? 
\end{Question} 

For the special case where $\phi|\Theta$, let us find a class of $\phi(S_{\Theta})$-inner vectors.
Define 
$$I := \frac{\Theta}{\phi}$$ and observe from a result in \cite{MR3720929} that an analytic  function $g$  on $\D$ multiplies $\K_{\phi}$ to $\K_{\Theta}$ if and only $g \in \K_{z I}$. Recall from Theorem \ref{poeriepotr} that the inner functions in $\K_{z I}$ are precisely the inner divisors of $I$.  Here is our result about some of  the $\phi(S_{\Theta})$-inner vectors. 

\begin{Theorem}
With the notation above, any unit vector from 
$$\{v \K_{\phi}: v|I\}$$ is a $\phi(S_{\Theta})$-inner vector. 
\end{Theorem}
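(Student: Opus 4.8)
The plan is to take a unit vector $f = v w$, where $v$ is an inner divisor of $I = \Theta/\phi$ and $w \in \K_\phi$ (note that $\|w\| = \|vw\| = 1$, since multiplication by the inner function $v$ is isometric), and to verify the two requirements in turn: that $f$ genuinely lies in $\K_\Theta$, and that $\langle \phi(S_\Theta)^n f, f\rangle = 0$ for every $n \geq 1$. For the first requirement, observe that since $v \mid I$, Theorem \ref{poeriepotr} applied with $\Theta$ replaced by $zI$ (which vanishes at the origin) tells us that $v$ is one of the inner functions in $\K_{zI}$, because these are precisely the inner divisors of $(zI)/z = I$. By the multiplier result of \cite{MR3720929} quoted just above the statement, every element of $\K_{zI}$ multiplies $\K_\phi$ into $\K_\Theta$. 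Hence $f = vw \in \K_\Theta$, as needed.

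For the second requirement, I would first reduce the compressed-operator inner product to a boundary integral. Using the multiplicativity of the $H^\infty$ functional calculus for $S_\Theta$ together with the identity $\phi(S_\Theta) = P_\Theta T_\phi|_{\K_\Theta}$, we get $\phi(S_\Theta)^n = P_\Theta T_{\phi^n}|_{\K_\Theta}$. Since $f \in \K_\Theta$ and $P_\Theta$ is the self-adjoint projection onto $\K_\Theta$, so that $P_\Theta f = f$, it follows that
$$\langle \phi(S_\Theta)^n f, f\rangle = \langle P_\Theta T_{\phi^n} f, f\rangle = \langle T_{\phi^n} f, f\rangle = \int_\T \phi^n |f|^2 \, dm.$$
Thus the problem is reduced to showing $\int_\T \phi^n |f|^2\,dm = 0$ for all $n \geq 1$.

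Finally, I would exploit the product structure $f = vw$. Because $v$ is inner, $|v| = 1$ almost everywhere on $\T$, so $|f|^2 = |w|^2$ on $\T$ and the integral becomes $\int_\T \phi^n |w|^2\,dm = \langle \phi^n w, w\rangle$. Writing $\langle \phi^n w, w\rangle = \langle \phi^{n-1} w, \overline{\phi}\, w\rangle$ for $n \geq 1$, the left factor $\phi^{n-1} w$ lies in $H^2$, while $w \in \K_\phi = \ker T_{\overline{\phi}}$ forces $P(\overline{\phi} w) = 0$, i.e.\ $\overline{\phi} w \perp H^2$. Hence the inner product vanishes, giving $\langle \phi(S_\Theta)^n f, f\rangle = 0$ for all $n \geq 1$ and completing the proof. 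The only point requiring genuine care is the containment $f \in \K_\Theta$, which is exactly where the multiplier characterization from \cite{MR3720929} and Theorem \ref{poeriepotr} come into play; the remainder is a short computation that at bottom restates Proposition \ref{6iehtg43tnnncknkn}(3), namely that every vector of $\K_\phi = \ker T_{\overline{\phi}}$ is automatically $T_\phi$-inner.
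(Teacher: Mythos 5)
Your proposal is correct and takes essentially the same route as the paper's proof: place $vw$ in $\K_{\Theta}$ via the multiplier result from \cite{MR3720929} together with Theorem \ref{poeriepotr}, use self-adjointness of $P_{\Theta}$ and $P_{\Theta}(vw)=vw$ to remove the projection, drop the inner factor $v$ by unimodularity on $\T$, and finish using $w \in \K_{\phi} = \ker T_{\overline{\phi}}$. The only cosmetic difference is that you peel off a single factor $\overline{\phi}$ (observing $\overline{\phi}w \perp H^2$) where the paper writes $\langle \phi^n w, w\rangle = \langle w, T_{\overline{\phi}}^{\,n} w\rangle = 0$; this is the same underlying fact.
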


\begin{proof}
Let $f$ be a unit vector from $\K_{\phi}$ and note that $v f \in \K_{\Theta}$  and hence $P_{\Theta}(v f) = v f$.  Thus for all $n \geq 1$ we have 
\begin{align*}
\langle (\phi(S_{\Theta}))^{n} (v f), v f\rangle & = \langle P_{\Theta}(\phi^n f v), v f\rangle\\
& = \langle \phi^{n} v f, P_{\Theta}(v f)\rangle\\
& = \langle \phi^n v f, v f\rangle\\
& = \langle \phi^n f, f\rangle\\
& = \langle f, T_{\overline{\phi}}^{n} f\rangle.
\end{align*}
But since $f \in \K_{\phi} = \ker T_{\overline{\phi}}$, this last quantity is equal to zero. This shows that $v f$ is a $\phi(S_{\Theta})$-inner vector. 
\end{proof}

When $\Theta(0) = 0$ and $\phi(z) = z$, notice how this recovers Theorem \ref{poeriepotr}. At the other extreme, notice that when $\phi  = \Theta$ then $I$ is a unimodular constant inner function and the theorem above yields $\K_{\Theta}$ as the complete set of $T_{\Theta}$-inner functions. Of course this result is obvious once one realizes that $\langle T_{\Theta} f, f\rangle =0$ for any $f \in \K_{\Theta}$ by the definition of the model space $\K_{\Theta} = (\Theta H^2)^{\perp}$.

Also observe that one can relax the assumption that $\phi|\Theta$ and set $I = u/\operatorname{gcd}(\Theta, \phi)$ and give a more general version of the theorem above.

\bibliography{references}

\begin{thebibliography}{10}

\bibitem{BS}
A.~B\"ottcher and Silbermann B.
\newblock {\em Analysis of Toeplitz operators}.
\newblock Springer Monographs in Mathematics. Springer-Verlag, Berlin, second
  edition, 2006.

\bibitem{MR2018850}
Isabelle Chalendar.
\newblock The operator-valued {P}oisson kernel and its applications.
\newblock {\em Irish Math. Soc. Bull.}, (51):21--44, 2003.

\bibitem{MR2050137}
Isabelle Chalendar and Jonathan~R. Partington.
\newblock Spectral density for multiplication operators with applications to
  factorization of {$L^1$} functions.
\newblock {\em J. Operator Theory}, 50(2):411--422, 2003.

\bibitem{InnerRKS}
R.~Cheng, J.~Mashreghi, and William~T. Ross.
\newblock Inner functions in reproducing kernel spaces.
\newblock preprint.

\bibitem{MR2215991}
Joseph~A. Cima, Alec~L. Matheson, and William~T. Ross.
\newblock {\em The {C}auchy transform}, volume 125 of {\em Mathematical Surveys
  and Monographs}.
\newblock American Mathematical Society, Providence, RI, 2006.

\bibitem{Duren}
P.~L. Duren.
\newblock {\em Theory of ${H}\sp{p}$ spaces}.
\newblock Academic Press, New York, 1970.

\bibitem{MR3720929}
Emmanuel Fricain, Andreas Hartmann, and William~T. Ross.
\newblock Multipliers between model spaces.
\newblock {\em Studia Math.}, 240(2):177--191, 2018.

\bibitem{MR3526203}
Stephan~Ramon Garcia, Javad Mashreghi, and William~T. Ross.
\newblock {\em Introduction to model spaces and their operators}, volume 148 of
  {\em Cambridge Studies in Advanced Mathematics}.
\newblock Cambridge University Press, Cambridge, 2016.

\bibitem{MR0152896}
Paul~R. Halmos.
\newblock Shifts on {H}ilbert spaces.
\newblock {\em J. Reine Angew. Math.}, 208:102--112, 1961.

\bibitem{MR1437202}
T.~L. Lance and M.~I. Stessin.
\newblock Multiplication invariant subspaces of {H}ardy spaces.
\newblock {\em Canad. J. Math.}, 49(1):100--118, 1997.

\bibitem{MR1969797}
Michael Stessin and Kehe Zhu.
\newblock Generalized factorization in {H}ardy spaces and the commutant of
  {T}oeplitz operators.
\newblock {\em Canad. J. Math.}, 55(2):379--400, 2003.

\end{thebibliography}

\end{document}